\newcolumntype{C}[1]{>{\centering\arraybackslash}m{#1}}
\newcolumntype{L}[1]{>{\raggedright\arraybackslash}m{#1}}
\newtheorem{theorem}{Theorem}[section]
\newtheorem{lemma}[theorem]{Lemma}
\newtheorem{proposition}[theorem]{Proposition}
\theoremstyle{definition}
\theoremstyle{remark}
\numberwithin{equation}{section}
\newcommand{\e}{\mathrm{e}}
\newcommand{\ccdot}{\!\cdot\!}  
\newcommand{\hook}{\lrcorner \,}
\newcommand{\yes}{\checkmark}
\newcommand{\yo}{\checkmark}
\newcommand{\fett}[1]{\textbf{\underline{#1}}}
\newcommand{\nil}[1]{\mathrm{Nil}(#1)}
\newcommand{\bR}{\mathbb{R}} 
\newcommand{\bC}{\mathbb{C}}
\newcommand{\n}{\mathfrak{n}} 
\newcommand{\del}{\mathfrak{d}} 
\newcommand{\solv}{\mathfrak{r}}
\newcommand{\g}{\mathfrak{g}} 
\newcommand{\h}{\mathfrak{h}}
\newcommand{\z}{\mathfrak{z}}
\newcommand{\GL}{\mathrm{GL}} 
\newcommand{\SL}{\mathrm{SL}}
\newcommand{\SU}{\mathrm{SU}}
\newcommand{\G}{\mathrm{G}}
\newcommand{\tr}{\mathrm{tr}}
\newcommand{\op}{\oplus} 
\newcommand{\ot}{\otimes}
\begin{document}

\title{Half-flat structures on decomposable Lie groups}

\author{Marco Freibert} \address{Marco Freibert, Fachbereich
  Mathematik, Universit\"at Hamburg, Bundesstr.~55, 20146 Hamburg,
  Germany}
\email{freibert@math.uni-hamburg.de}

\author{Fabian Schulte-Hengesbach} \address{Fabian Schulte-Hengesbach,
  Fachbereich Mathematik, Universit\"at Hamburg, Bundesstr.~55, 20146
  Hamburg, Germany}
\email{schulte-hengesbach@math.uni-hamburg.de}

\subjclass[2000]{53C25 (primary), 53C15, 53C30 (secondary)}
\thanks{This work
   was supported by the German Research Foundation (DFG) within the
   Collaborative Research Center 676 "Particles, Strings and the
   Early Universe".}
   
 \begin{abstract}
  Half-flat $\SU(3)$-structures are the natural initial values for
  Hitchin's evolution equations whose solutions define parallel
  $\G_2$-structures. Together with the results of \cite{SH}, the
  results of this article completely solve the existence problem of
  left-invariant half-flat $\SU(3)$-structures on decomposable Lie
  groups. The proof is supported by the calculation of the Lie algebra
  cohomology for all indecomposable five-dimensional Lie algebras
  which refines and clarifies the existing classification of
  five-dimensional Lie algebras.
\end{abstract}

\maketitle

\section{Introduction}
\label{intro}
An $\SU(3)$-structure on a six-dimensional real manifold $M$ consists
of an almost Hermitian structure $(g,J,\omega)$ and a unit $(3,0)$-form
$\Psi$. The structure is called half-flat if it satisfies the exterior
system
\begin{eqnarray*}
  \label{hf}
  d \,\mathrm{Re} \Psi=0\;, \quad d (\omega \wedge \omega) = 0.
\end{eqnarray*}
Half-flat $\SU(3)$-structures first appeared as initial values for the
Hitchin flow \cite{Hi1} which is still the main motivation for
studying their properties. In the physics literature, half-flat
$\SU(3)$-structures are considered as internal spaces for string
compactifications \cite{GLMW}. For an account of the known results on
half-flat structures and further references, the reader may consult
for instance \cite{SH}.

In order to obtain explicit examples and classification results, we
assume a high degree of symmetry and consider left-invariant half-flat
$\SU(3)$-structures on Lie groups. Due to the left-invariance, the
structure is completely determined by the solution of an algebraic
system on the Lie algebra, which we refer to as a half-flat
$\SU(3)$-structure on a Lie algebra. A precise definition is given in
section \ref{prelim}.

So far, the problem of classifying six-dimensional Lie algebras
admitting a half-flat $\SU(3)$-structure has been solved for nilpotent
Lie algebras \cite{C} and direct sums of three-dimensional Lie
algebras \cite{SH}. The proof of both results is obtained by the
following method. For each case in the known list of the Lie algebras
in question, either a certain obstruction condition is applied or an
explicit example for a half-flat $\SU(3)$-structure is given.

In this article, the remaining decomposable six-dimensional Lie
algebras are considered, separately dealing with direct sums of four-
and two-dimensional Lie algebras and direct sums of a five-dimensional
Lie algebra and $\bR$. In particular, we use the classification of
four- and five-dimensional Lie algebras which has been obtained by
Mubarakzyanov, \cite{Mu4d}, \cite{Mu5d}. More accessible lists can be
found in \cite{PSWZ} whose notation we adopt. In the four-dimensional
case, a new complete proof of the classification was recently given in
\cite{ABDO} including a thorough overview of the literature on
four-dimensional Lie algebras.

A general problem of the existing classification lists is the
appearance of families of Lie algebras depending on continuous
parameters. For instance, the unimodularity of these families depends
in many cases on the value of the parameters. In consequence,
isomorphism classes with completely different properties are summed up
in one ``class''. In the appendix, we give new lists of four- and
five-dimensional Lie algebras in which the existing classes are
subdivided according to the dimensions $\mathrm{h}^k(\g)$ of the Lie
algebra cohomology groups and the dimension of the center. Not
surprisingly, the distinction by Lie algebra cohomology turns out to
be useful for our classification problem concerning closed three- and
four-forms. The complexity and length of the new lists illustrate the
diversity of the class of solvable Lie algebras which is well-known to
be rapidly increasing with the dimension. A further subdivision of the
classes using finer invariants seems to be possible. In fact, in one
case even the existence or non-existence of a left-invariant
half-flat $\SU(3)$-structure singles out certain parameter values with
identical Lie algebra cohomology.

We summarize the results of this article in the following theorems
which are also charted in the tables of the appendix. The nilradical
of a Lie algebra $\g$ is denoted by $\nil{\g}$.
\begin{theorem}
  \label{4d_theorem}
  Let $\g$ be a four-dimensional Lie algebra.
  \begin{enumerate}[(a)]
  \item The direct sum $\g \op \bR^2$ admits a half-flat
    $\SU(3)$-structure if and only if $\g=\mathfrak{u} \op \bR$ for a unimodular
    three-dimensional Lie algebra $\mathfrak{u}$.
  \item The direct sum $\g \op \solv_2$ admits a half-flat
    $\SU(3)$-structure if and only if
    \begin{enumerate}[(i)]
    \item $\g$ is unimodular and not in \{
      $A^{-\frac{1}{2},-\frac{1}{2}}_{4,5}$, $\h_3\op\bR$, $\bR^4$ \}
      or
    \item $\g$ is in \{ $A^{-\frac{1}{2}}_{4,9}$, $A_{4,12}$, $\solv_2
      \op \solv_2$ \}.
    \end{enumerate}
  \end{enumerate}
\end{theorem}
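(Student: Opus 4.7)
The plan is to proceed by case-by-case analysis over the refined classification of four-dimensional Lie algebras presented in the appendix. For each isomorphism class $\g$, I would either write down an explicit half-flat $\SU(3)$-structure on the six-dimensional sum $\g \op \bR^2$ (respectively $\g \op \solv_2$), or apply an algebraic obstruction built from Hitchin's stability theory for closed pairs $(\mathrm{Re}\,\Psi, \omega \wedge \omega)$ on the six-dimensional Lie algebra. The refined cohomological invariants $\mathrm{h}^k(\g)$ and $\dim \z(\g)$ from the appendix feed directly into this analysis by controlling the size and structure of the spaces of closed three- and four-forms.

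The existence direction of (a) is immediate from \cite{SH}: if $\g = \mathfrak{u} \op \bR$ with $\mathfrak{u}$ three-dimensional and unimodular, then $\g \op \bR^2 \cong \mathfrak{u} \op \bR^3$ is a direct sum of two unimodular three-dimensional Lie algebras, which is handled there. For the existence direction of (b) I would exhibit a concrete pair $(\omega, \Psi)$ on $\g \op \solv_2$ for each unimodular $\g$ outside the three exceptions in (i) and for each of the three non-unimodular cases in (ii); the Lie algebra $\solv_2$ carries a distinguished one-form $\ep^2$ with $d\ep^2 = \ep^{12}$, and this single nontrivial differential can be used to compensate the unimodularity defect of $\g$ in (ii), or simply kept out of the closed forms in the unimodular cases.

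The non-existence direction is the main work. For a candidate $\g$ the scheme is: first, describe $Z^3$ and $Z^4$ of the six-dimensional Lie algebra explicitly in a basis adapted to the splitting; second, parametrise the intersection of $Z^3$ with the open orbit of stable three-forms modulo $\Aut(\g) \times \Aut$ of the second summand; third, from each closed stable $\rho$ compute the induced almost-complex structure $J_\rho$ together with the compatibility constraint $\rho \wedge \omega = 0$; and finally check whether the resulting four-form $\omega \wedge \omega$ can be closed for some admissible $\omega$. A significant number of candidates will already fall out from unimodularity considerations, from direct dimension counts on $Z^3$ and $Z^4$, or because the classification of half-flat sums of three-dimensional Lie algebras in \cite{SH} restricts admissible $\g$ of the form $\mathfrak{u}\op\bR$.

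The hard part will be the exceptional unimodular cases in (b)(i) --- namely $A^{-\frac{1}{2},-\frac{1}{2}}_{4,5}$, $\h_3 \op \bR$, and $\bR^4$ --- for which the cohomology of $\g \op \solv_2$ is large enough that no trivial obstruction applies, yet no compatible closed pair $(\rho, \omega \wedge \omega)$ exists. Ruling these out requires a careful parametrisation of the stable closed three-forms modulo the full automorphism group and a direct verification that no compatible closed four-form results. Equally delicate are the non-unimodular examples in (b)(ii), where explicit half-flat structures must be found despite the general tendency of non-unimodular six-dimensional Lie algebras to fail half-flatness; the doubled structure of $\solv_2 \op \solv_2$ and the natural complex structures on $A^{-\frac{1}{2}}_{4,9}$ and $A_{4,12}$ suggest the ans\"atze. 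Once every case is settled, the collation is routine and yields both assertions of the theorem along with the classification tables of the appendix.
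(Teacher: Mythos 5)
Your skeleton (case-by-case over the refined lists, explicit examples for existence, algebraic obstructions for non-existence, and reducing part (a)'s existence to the sums of three-dimensional algebras treated in \cite{SH}) matches the paper. But the non-existence scheme you propose is not the one that makes the proof work, and as stated it has a genuine gap. You plan to parametrise the closed stable three-forms modulo the automorphism group, impose the compatibility $\omega\wedge\rho=0$, and then test whether $\omega^2$ can be closed. For these solvable six-dimensional algebras $Z^3$ is typically $10$--$15$ dimensional and the automorphism groups are small, so this orbit analysis is intractable and you never name a concrete obstruction that would actually kill a case. The paper's key ingredient, which you are missing, is Conti's positivity lemma: for any half-flat $\SU(3)$-structure and any nonzero one-form $\alpha$ one has $\alpha\wedge J^*_\rho\alpha\wedge\frac{1}{2}\omega^2\neq 0$ (this is just $g(\alpha,\alpha)>0$ written via equation (\ref{gonL1})). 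This yields an obstruction that completely \emph{decouples} the two closed forms: if a single $\alpha$ satisfies $\alpha\wedge\tilde J^*_\rho\alpha\wedge\sigma=0$ for \emph{all} closed three-forms $\rho$ and \emph{all} closed four-forms $\sigma$ --- with no stability, no compatibility, and no requirement that $\sigma$ be a square --- then no half-flat structure exists. That linear-algebra check on $Z^3\times Z^4$ (with $\alpha=\e^4$ throughout) is what disposes of every indecomposable case in Proposition \ref{42obst}(i),(ii), and it is the step your proposal does not supply.

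Two smaller corrections. First, you list $\h_3\op\bR$ and $\bR^4$ among the hard exceptional cases of (b)(i); these summands are decomposable, so those sums are already obstructed in \cite{SH} and require no new work here. The only genuinely exceptional case in this paper is $A_{4,5}^{-1/2,-1/2}\op\solv_2$, where the decoupled obstruction fails; the paper handles it by the identity $\e^5\wedge\tilde J^*_\rho\e^4\wedge\sigma=-\e^4\wedge\tilde J^*_\rho\e^5\wedge\sigma=(\e^4+\sqrt2\,\e^5)\wedge\tilde J^*_\rho(\e^4+\sqrt2\,\e^5)\wedge\sigma$ valid for all closed $\rho,\sigma$, which forces $\e^4+\sqrt2\,\e^5$ to be a null covector for the induced metric --- impossible in the Riemannian case. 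You flag this case as delicate but give no mechanism for it. Second, "unimodularity considerations" do not by themselves rule out candidates: $\g\op\solv_2$ is never unimodular, yet many such sums do admit half-flat structures.
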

\begin{theorem}
  \label{5d_theorem}
  Let $\g$ be an indecomposable five-dimensional Lie algebra.
  \begin{enumerate}[(a)]
  \item If $\g$ is unimodular, then $\g \op \bR$
    admits a half-flat $\SU(3)$-structure if and only if
    \begin{enumerate}[(i)]
    \item
    $\g$ is nilpotent and $\g\neq A_{5,3}$ or
    \item $\nil{\g}$ is four-dimensional, $\mathrm h^2(\g)\geq 2$ and
      $\g \neq A_{5,9}^{-1,-1}$ or
    \item $\nil{\g}$ is $\bR^3$ or $\bR^2$.
    \end{enumerate}
  \item If $\g$ is non-unimodular, then $\g\op \bR$ admits a
    half-flat $\SU(3)$-structure if and only if
    \begin{enumerate}[(i)]
    \item $\nil{\g}$ is $\mathfrak{h}_3$ or
    \item $\g$ is in \{ $A_{5,19}^{-1,3}$, $A_{5,19}^{2,-3}$,
      $A_{5,30}^0$ \}.
    \end{enumerate}
  \end{enumerate}
\end{theorem}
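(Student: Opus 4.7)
The plan is to proceed case by case through the Mubarakzyanov/PSWZ classification of indecomposable five-dimensional Lie algebras, using the refined lists in the appendix (organized by $\mathrm h^k(\g)$ and $\dim Z(\g)$) as bookkeeping. For each such $\g$, I would either construct an explicit left-invariant half-flat $\SU(3)$-structure on $\g \op \bR$ or rule out its existence by an obstruction argument. The organization by nilradical in the statement reflects the main source of the obstructions: the dimension and structure of $\nil{\g}$ controls the algebraic form of closed $3$- and $4$-forms, and hence the Lie algebra cohomology groups that parameterize them.

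For the existence direction I would treat each ``yes'' case by writing down, in a basis $(e^1,\ldots,e^6)$ adapted to the structure equations, an explicit $\SU(3)$-frame $(\omega,\Psi)$. A convenient starting ansatz is the standard frame $\mathrm{Re}\,\Psi = e^{135}-e^{146}-e^{236}-e^{245}$ and $\omega = e^{12}+e^{34}+e^{56}$, deformed by a generic element of $\GL(6,\bR)$ whose free parameters can be tuned to kill the bracket contributions in $d(\mathrm{Re}\,\Psi)$ and $d\omega^2$. For the nilpotent case (a)(i), existence follows directly from Conti's classification of half-flat six-dimensional nilpotent Lie algebras applied to $\g \op \bR$; the same input identifies $A_{5,3}$ as the sole exception. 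The families in (a)(ii), (a)(iii) and (b)(i) are small enough that for each isomorphism class the closedness conditions can be verified by a finite calculation once a suitable ansatz is chosen.

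For the non-existence direction the main tool is a detailed analysis of the spaces of closed $3$- and $4$-forms on $\g \op \bR$. The typical obstruction takes one of two forms: either all closed $3$-forms share a common kernel vector and hence none is stable, so Hitchin's correspondence produces no almost complex structure, or every stable closed $\rho$ determines a $J$ for which no $J$-invariant closed $4$-form has a positive square root $\omega$. These arguments yield the cohomological condition $\mathrm h^2(\g)\geq 2$ appearing in (a)(ii) and, in the non-unimodular case, restrict $\nil{\g}$ to $\mathfrak{h}_3$ in (b)(i). The hard part, and the reason a refined classification is needed in the appendix at all, will be the borderline cases singled out in the theorem---the exceptional exclusions $A_{5,3}$ and $A^{-1,-1}_{5,9}$ and the isolated non-unimodular inclusions $A^{-1,3}_{5,19}$, $A^{2,-3}_{5,19}$, $A^0_{5,30}$---for which neither the cohomological counts nor a routine ansatz immediately settles the question. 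For these algebras I would parameterize the closed $3$-forms explicitly, compute Hitchin's quartic invariant to determine when stability holds, and then check compatibility with the closed $\omega^2$ by hand; this is the computationally delicate part of the proof.
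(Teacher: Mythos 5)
Your existence direction matches the paper: explicit pairs $(\omega,\rho)$ for each ``yes'' case (Table \ref{examples51}) together with Conti's nilpotent classification \cite{C}, which also disposes of $A_{5,3}$. The gap is in the non-existence direction. The two obstruction mechanisms you propose do not cover the actual cases. Your first mechanism (all closed three-forms degenerate, or more generally $\lambda(\rho)\geq 0$) only handles a fraction of the ``no'' entries: as Table \ref{table_5d} records, many of the excluded algebras --- e.g.\ $A_{5,20}^{0}\op\bR$, $A_{5,19}^{1,-2}\op\bR$, $A_{5,22}\op\bR$, generic $A_{5,30}^{\alpha}\op\bR$, $A_{5,33}^{\alpha,\beta}\op\bR$ --- do admit closed stable three-forms with $\lambda<0$, so no argument about stability or the sign of $\lambda$ can rule them out. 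Your second mechanism (``for every stable closed $\rho$, no $J_\rho$-invariant closed four-form has a positive square root'') is in principle the right kind of statement, but as described it requires sweeping a quantifier over the entire (typically $10$-plus-dimensional) space of closed three-forms and, for each, over all closed four-forms; you give no finite procedure for this, and it is not how the condition $\mathrm h^2(\g)\geq 2$ or the borderline exclusions are actually established.

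What is missing is the single reduction that makes the whole non-existence half tractable: Conti's observation (Lemma \ref{assertion_Conti}) that positive-definiteness of the induced metric forces $\alpha\wedge J_\rho^*\alpha\wedge\frac{1}{2}\omega^2\neq 0$ for every non-zero one-form $\alpha$, combined with formula (\ref{JonL1}). This converts non-existence into Proposition \ref{obst_algebra}: exhibit \emph{one} fixed one-form $\alpha$ (here $\alpha=\e^5$ in the standard basis) such that $\alpha\wedge\tilde J_\rho^*\alpha\wedge\sigma=0$ identically for all closed $\rho\in Z^3$ and all closed $\sigma\in Z^4$ --- a polynomial identity in the coefficients of $\rho$ and $\sigma$ that a computer algebra system verifies at once, with no need to decide stability, construct $J_\rho$ globally, or test positivity case by case. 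Without this lemma (or the refined variant used for $A_{4,5}^{-1/2,-1/2}\op\solv_2$ in Proposition \ref{42obst}, which exploits the same positivity of $g$ via (\ref{gonL1})), your plan for the hard cases does not close.
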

Theorem \ref{4d_theorem} and Theorem \ref{5d_theorem} are both proved
in section \ref{sec:obst}. Recall that unimodular Lie algebras are
particularly interesting since unimodularity is a necessary condition
for the existence of a cocompact lattice.

We remark that our results are in accordance with the following
results concerning the existence of left-invariant hypo
$\SU(2)$-structures on five-dimensional Lie groups. The existence
problem is solved for the nilpotent case in \cite{CS} and, very
recently, for the solvable case in \cite{CFS}. A hypo
$\SU(2)$-structure on a five-dimensional Lie algebra $\h$ induces a
half-flat $\SU(3)$-structure on the six-dimensional Lie algebra
$\g=\h\op\bR$.  Conversely, given a half-flat $\SU(3)$-structure on a
six-dimensional Lie algebra $\g=\h\op \bR$, one can define a hypo
$\SU(2)$-structure on the five-dimensional Lie algebra $\h$ if the
decomposition $\g=\h\op \bR$ is orthogonal with respect to the induced
euclidean metric. For all five-dimensional hypo structures constructed
in \cite{CFS} we independently found six-dimensional half-flat
examples. However, for two five-dimensional,
indecomposable Lie algebras which do not admit a hypo
$\SU(2)$-structure, namely $A_{5,19}^{-1,3}$ and $A_{5,37}$, we were
able to find a half-flat $\SU(3)$-structure on the corresponding
six-dimensional Lie algebras $A_{5,19}^{-1,3}\op \bR$ and $A_{5,37}\op
\bR$ such that the summands are not orthogonal; see Table
\ref{examples51}.
In Section \ref{sec:3forms}, we prove a number of secondary results
concerning the existence of closed stable forms on Lie algebras and
the existence of half-flat structures with indefinite metrics. More
precisely, we determine the decomposable Lie algebras which do not
admit closed stable forms and those which only admit closed stable
forms $\rho$ inducing a para-complex structure. In consequence, the
first group of Lie algebras does not admit any half-flat structure and
the second group does not admit half-flat $\SU(p,q)$-structures,
$p+q=3$.

{\it Acknowledgments.} The authors thank the University of Hamburg for financial support and
Vicente Cort\'es for suggesting the project. This work
was supported by the German Research Foundation (DFG) within the
Collaborative Research Center 676 "Particles, Strings and the
Early Universe".

\section{Preliminaries}
\label{prelim}
Due to the formalism of stable forms \cite{Hi2}, it is possible to
completely describe an $\SU(3)$-structure by a pair $(\omega,\rho)$ of a
two-form and three-form satisfying certain compatibilities. In fact,
the formalism covers also $\SU(p,q)$-structures, $p+q=3$ and
$\SL(3,\bR)$-structures. A thorough introduction including proofs is
for instance given in \cite{CLSS} and \cite{SH} and we restrict
ourself to a short repetition of the formulas we need.

\subsection{Stable forms in dimension six}
A $p$-form on a vector space $V$ is called \emph{stable} if its orbit
under $\GL(V)$ is open. We only consider the case when $V$ is an
oriented six-dimensional real vector space. Let $\kappa: \Lambda^5
V^*\rightarrow V\otimes \Lambda^6 V^*$ be the canonical isomorphism
$\kappa(\xi):=X\otimes \nu$ with $X\hook \nu=\xi$.  By defining
\begin{align*}
K_{\rho}(v):=&\kappa\left((v\hook\rho)\wedge \rho \right)\in V\ot \Lambda^6 V^*,\\
\lambda(\rho):=&\frac{1}{6} \tr K_{\rho}^2 \in \left(\Lambda^6 V^*\right)^{\ot 2},
\end{align*}
a quartic invariant $\lambda$ is associated to each three-form $\rho
\in \Lambda^3 V^*$. Since $\rho$ is stable if and only if $\lambda(\rho) \neq
0$, a stable three-form $\rho$ defines a volume form by
\begin{align*}
\phi(\rho):=&\sqrt{|\lambda(\rho)|}\in \Lambda^6 V^*, 
\end{align*}
where the positively oriented root is chosen. The endomorphism 
\begin{align*}
J_{\rho}:=&\frac{1}{\phi(\rho)} K_{\rho} 
\end{align*}
turns out to be a complex structure if $\lambda(\rho)<0$ and a
para-complex structure if $\lambda(\rho)>0$. In both cases, a
$(3,0)$-form $\Psi$ can be defined by $\mathrm{Re}(\Psi)=\rho$ and
$\mathrm{Im}(\Psi)=J_\rho^*\rho$. 
On one-forms, the (para-)complex structure acts by the formula
\begin{eqnarray}
  \label{JonL1}
  J_{\rho}^* \alpha (v) \,  \phi(\rho) &=& \alpha \wedge (v \hook \rho) \wedge \rho, \qquad v\in V, \: \alpha \in V^*.
\end{eqnarray}
A two-form $\omega\in \Lambda^2 V^*$ is stable if and only if it is
non-degenerate, i.e.
\begin{equation*}
\phi(\omega):=\frac{1}{6}\omega^3\neq 0.
\end{equation*}
A pair $(\omega,\rho) \in \Lambda^2 V^* \times \Lambda^3 V^*$ of stable forms is
called \emph{compatible} if \[\omega \wedge \rho = 0.\] Such a pair induces a
pseudo-Euclidean metric $g = \omega(J_\rho\,.\, , \, .)$ on $V$. On
one-forms, the induced metric satisfies the identity
\begin{eqnarray}
  \label{gonL1}
  \alpha\wedge J_\rho^* \beta \wedge \omega^{2} &=& \frac{1}{2} g(\alpha,\beta)
  \omega^3,\quad \alpha,\beta\in V^*,
\end{eqnarray}
if the pair $(\omega,\rho)$ is \emph{normalized} by the condition
\begin{eqnarray*}
  \label{comp2}
  \phi(\rho) = 2 \phi(\omega) &\iff& J_\rho^* \rho \wedge \rho = \frac{2}{3}\, \omega^3.
\end{eqnarray*}

\subsection{Half-flat structures on Lie algebras}
By definition, an \emph{$\SU(3)$-structure} on a six-manifold $M$ is a
reduction of the frame bundle of $M$ to $\SU(3)$. It is well-known
that there is a one-to-one correspondence between $\SU(3)$-structures
and quadrupels $(g,J,\omega,\Psi)$ where $(g,J,\omega)$ is an almost
Hermitian structure and $\Psi$ is a unit $(3,0)$-form. Due to the
formalism of stable forms, $\SU(3)$-structures are also one-to-one
with pairs $(\omega,\rho)\in \Omega^2 M\times \Omega^3 M$ such that
$(\omega_p,\rho_p)$ is for every $p\in M$ a compatible and normalized
pair of stable forms on $T_p M$ with $\lambda(\rho_p)<0$ inducing a
Riemannian metric.

Completely analogously, one can deal with indefinite metrics leading
to \emph{$\SU(p,q)$-structures}, $p+q=3$, and even almost para-complex
instead of almost complex structures leading to
\emph{$\SL(3,\bR)$-structures} with $\lambda(\rho)>0$. Since we are
mainly concerned with the Riemannian case in this article, we refer
the reader to \cite{CLSS} and \cite{SS} for definitions and properties
of these structures and further references.

Unifying all cases, a \emph{half-flat structure} is defined as an
everywhere compatible and normalized pair $(\omega,\rho)\in \Omega^2
M\times \Omega^3 M$ of stable forms satisfying the exterior system
$d\rho=0$, $d\omega^2=0$.

For a Lie algebra $\g$ of a Lie group $G$, the well-known formula
$$d\alpha(X,Y) = - \alpha([X,Y])$$ holds for all $X,Y \in \g$ and $\alpha \in \g^*$
when identifying one-forms $\alpha \in \g^*$ with left-invariant one-forms
on $G$. For an abstract Lie algebra, one can define an exterior
derivative $d$ on $\Lambda^*\g^*$ by this formula. In consequence, the
structure of a Lie algebra is equivalently encoded in a dual way by
the exterior derivative on one-forms. Since the Jacobi identity holds
if and only if $d^2$ vanishes, $(\Lambda^*\g^*,d)$ defines a cohomology
$H^*(\g)$ called Lie algebra cohomology or Chevalley-Eilenberg
cohomology with respect to the trivial representation.

We define a \emph{half-flat structure on a Lie algebra $\g$} as a
compatible and normalized pair $(\omega,\rho)\in\Lambda^2(\g^*) \times \Lambda^3(\g^*)$ of
stable forms satisfying the algebraic system
\begin{equation*}\label{hfeqs}
d\rho=0,\quad d\omega^2=0.
\end{equation*} 
By left-multiplication, a half-flat structure on a Lie algebra can
obviously be extended to a half-flat structure on every corresponding
Lie group. Hence, it suffices to consider half-flat structures on Lie
algebras for the classification results we are interested in.

\section{Obstruction theory for half-flat $\SU(3)$-structures}
\label{sec:obst}
An obstruction to the existence of a half-flat $\SU(3)$-structure on a
Lie algebra is proved in \cite[Theorem 1]{C} which is based on the
vanishing of two cohomology groups of a certain double complex. The
idea is simplified in \cite[Proposition 4.2]{SH}. These obstruction
conditions are the essential means in proving the classification
results obtained in \cite{C} and \cite{SH}. However, for a small
number of special cases the obstruction condition fails, although
there do not exist half-flat $\SU(3)$-structures. In \cite[Lemma 4]{C}
and \cite[Lemma 4.9]{SH}, refined obstructions are applied to these
special cases. Both the original obstruction and the refinements essential
rely on the following assertion proved by Conti in \cite{C} at the
beginning of the proof of Theorem 1:
\begin{lemma}\label{assertion_Conti}
Let $\g$ be a six-dimensional Lie algebra and $(\omega,\rho)$ a half-flat $\SU(3)$-structure
on $\g$. Then each non-zero one-form $\alpha\in \g^*$
fulfills
\begin{equation}\label{Conti}
\alpha \wedge  J^*_\rho \alpha \wedge \frac{1}{2} \omega^2 \neq 0.
\end{equation}
\end{lemma}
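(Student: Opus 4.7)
The plan is essentially to specialize the metric identity (\ref{gonL1}) to the diagonal $\beta = \alpha$, exploiting that for an $\SU(3)$-structure the induced metric is positive definite. Note first that the closure conditions $d\rho = 0$ and $d\omega^2 = 0$ never enter the argument, so the statement is in fact a purely algebraic identity for compatible, normalized pairs of stable forms with $\lambda(\rho) < 0$. All of the non-trivial work has already been packaged into the formulas of Section~\ref{prelim}.

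Concretely, I would set $\beta = \alpha$ in (\ref{gonL1}) to obtain
\[
\alpha \wedge J^*_\rho \alpha \wedge \omega^2 \;=\; \tfrac{1}{2}\, g(\alpha, \alpha)\, \omega^3.
\]
Since the $\SU(3)$-structure induces a Riemannian metric, $g(\alpha,\alpha) > 0$ whenever $\alpha \neq 0$. The stability, equivalently non-degeneracy, of $\omega$ guarantees that $\omega^3 = 6\,\phi(\omega) \neq 0$. Thus the right-hand side is nonzero, hence so is the left, and dividing through by $2$ yields (\ref{Conti}).

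There is essentially no obstacle to overcome; the only point worth emphasizing is that the Riemannian signature is what does the work. The analogous calculation for an $\SU(p,q)$- or $\SL(3,\bR)$-structure produces an indefinite induced metric on $\g^*$, and the conclusion then fails precisely on the isotropic cone. This is exactly why the obstructions built on top of this lemma (and those in \cite{C}, \cite{SH}) are specific to the positive-definite $\SU(3)$ setting, rather than to the unified notion of half-flat structure introduced in Section~\ref{prelim}.
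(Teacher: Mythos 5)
Your argument is correct and coincides with the paper's own justification, which notes immediately after the lemma that the statement follows directly from equation (\ref{gonL1}) together with the positivity of the induced Riemannian metric and the non-degeneracy of $\omega$. Your additional observation that the closure conditions play no role and that the conclusion fails on the isotropic cone in the indefinite case is accurate but not needed for the proof.
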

Note that Lemma \ref{assertion_Conti} also follows directly from
equation (\ref{gonL1}) considering that the induced metric $g$ is
Riemannian and $\omega\in \Lambda^2 \g^*$ is non-degenerate.

Lemma \ref{assertion_Conti} yields the following obstruction.
\begin{proposition}
  \label{obst_algebra}
  Let $\g$ be a six-dimensional Lie algebra with a volume form $\nu
  \in \Lambda^6\g^*$. If there is a non-zero one-form $\alpha \in\g^*$ satisfying
  \begin{equation}
    \label{tildeJ:1}
    \alpha \wedge \tilde J^*_\rho \alpha \wedge \sigma = 0
  \end{equation} 
  for all closed three-forms $\rho \in \Lambda^3 \g^*$ and all closed four-forms $\sigma \in \Lambda^4 \g^*$, where $\tilde
  J_\rho^*\alpha$ is defined for $X \in \g$ by
  \begin{eqnarray}
    \label{tildeJ:2}
    \tilde J_{\rho}^* \alpha (X) \,\nu &=& 
    \alpha \wedge (X \hook \rho) \wedge \rho,
  \end{eqnarray}
  then $\g$ does not admit a half-flat $\SU(3)$-structure.
\end{proposition}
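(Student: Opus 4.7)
The proof should be a short reduction to Lemma \ref{assertion_Conti}. My plan is to argue by contradiction: suppose there exists a half-flat $\SU(3)$-structure $(\omega,\rho)$ on $\g$. By definition of half-flat, $\rho$ is a closed three-form and $\sigma:=\tfrac{1}{2}\omega^2$ is a closed four-form. Thus the hypothesis of the proposition applies to this particular pair $(\rho,\sigma)$ and yields
\begin{equation*}
\alpha \wedge \tilde J_\rho^* \alpha \wedge \tfrac{1}{2}\omega^2 = 0.
\end{equation*}

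The remaining step is to observe that $\tilde J_\rho^*\alpha$ and $J_\rho^*\alpha$ differ only by a nowhere-vanishing scalar. Indeed, both definitions (\ref{JonL1}) and (\ref{tildeJ:2}) express the same five-form $\alpha\wedge (X\hook \rho)\wedge \rho$, but use different reference volume forms, namely $\phi(\rho)$ versus $\nu$. Since $\phi(\rho)$ is a volume form whenever $\rho$ is stable (which holds for the real part of an $\SU(3)$-structure, where $\lambda(\rho)<0$), we can write $\phi(\rho)=c\,\nu$ for some nonzero $c\in\bR$, and comparing the two defining equations gives $\tilde J_\rho^*\alpha = c\, J_\rho^*\alpha$. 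Substituting this into the displayed equation above yields
\begin{equation*}
c\,\alpha \wedge J_\rho^* \alpha \wedge \tfrac{1}{2}\omega^2 = 0,
\end{equation*}
which, since $c\neq 0$, directly contradicts Lemma \ref{assertion_Conti}.

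The step that requires the most care is verifying the scalar relation between $\tilde J_\rho^*$ and $J_\rho^*$; but this is purely formal and is little more than reading off the two definitions side by side. There is no genuine obstacle — the proposition is essentially a restatement of Lemma \ref{assertion_Conti} that is weakened to an expression involving only closed forms and a fixed auxiliary volume form $\nu$ (so that it becomes a finite linear algebra test, i.e.\ checkable on the cohomology classes $H^3(\g)$ and $H^4(\g)$ once an appropriate parametrisation of closed three- and four-forms is chosen).
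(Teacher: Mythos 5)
Your argument is correct and is essentially identical to the paper's own proof: both reduce to Lemma \ref{assertion_Conti} by noting that $\rho$ and $\tfrac{1}{2}\omega^2$ are closed and that $\tilde J_\rho^*\alpha$ is a non-zero multiple of $J_\rho^*\alpha$ via comparison of (\ref{JonL1}) and (\ref{tildeJ:2}). Your explicit verification of the scalar relation $\tilde J_\rho^*\alpha = c\,J_\rho^*\alpha$ with $\phi(\rho)=c\,\nu$ is exactly the step the paper leaves implicit.
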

\begin{proof}
  Suppose that $\alpha\in \g^*$ is a
  non-zero one-form as in the statement and that, nevertheless, $(\omega,\rho)\in \Lambda^2 \g^*\times \Lambda^3 \g^*$
  is a half-flat $\SU(3)$-structure on $\g$. Then $\rho\in \Lambda^3 \g^*$ and $\frac{1}{2}\omega^2\in \Lambda^4 \g^*$ are closed.
  Moreover, the one-form $\tilde J^*_\rho\alpha$ is a non-trivial multiple of $J^*_\rho \alpha$ by equation
  (\ref{JonL1}). Thus Lemma \ref{assertion_Conti} implies
  \begin{equation*}
  \alpha \wedge \tilde J^*_\rho \alpha \wedge \frac{1}{2}\omega^2 \neq 0,
  \end{equation*}
  contradicting equation (\ref{tildeJ:1}). This proves the statement.
\end{proof}
\begin{proposition}
  \label{42obst}
  Let $\g$ be an indecomposable four-dimensional Lie algebra.
  \begin{enumerate}[(i)]
  \item The direct sum $\g \op \bR^2$ does not admit a half-flat
    $\SU(3)$-structure.
  \item The direct sum $\g \op \solv_2$ does not admit a half-flat
    $\SU(3)$-structure if $\g$ is not unimodular and not
    $A^{-\frac{1}{2}}_{4,9}$ or $A_{4,12}$.
  \item The direct sum $A_{4,5}^{-\frac{1}{2},-\frac{1}{2}} \op
    \solv_2$ does not admit a half-flat $\SU(3)$-structure.
  \end{enumerate}
\end{proposition}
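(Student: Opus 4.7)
The plan is to apply Proposition~\ref{obst_algebra} by exhibiting, for each indecomposable four-dimensional Lie algebra $\g$ appearing in the statement, a nonzero one-form $\alpha \in (\g \op \bR^2)^*$ or $(\g \op \solv_2)^*$ for which identity (3.1) holds against every closed three-form $\rho$ and every closed four-form $\sigma$. The natural candidate is dictated by the direct-sum structure: in part (i) I would take $\alpha$ to be any nonzero element of the two-dimensional space of closed one-forms coming from $\bR^2$, and in parts (ii) and (iii) the unique (up to scale) closed direction in $\solv_2$, namely $e^5$ in a basis $\{e_5,e_6\}$ with $[e_5,e_6]=e_6$ (so that $de^5=0$ while $de^6=-e^{56}$).

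With this choice, any closed three-form on $\g\op \bR^2$ decomposes uniquely as $\rho = \rho_0 + e^5\wedge\beta_1 + e^6\wedge\beta_2 + e^{56}\wedge\gamma$ with $\rho_0,\beta_1,\beta_2,\gamma$ closed in $\Lambda^*\g^*$, and a parallel expansion governs $\g\op \solv_2$ after accounting for $de^6 = -e^{56}$. Together with an analogous decomposition of closed four-forms, this reduces the verification of (3.1) to a bookkeeping exercise in the Lie algebra cohomology of $\g$ in degrees $2$, $3$, $4$ and the structure constants of $\g$. I would then compute $\tilde J_\rho^*\alpha$ from (3.2) by expanding $\alpha\wedge(X\hook\rho)\wedge\rho$ on the basis of $\g\op\{e_5,e_6\}$, observe that only quadratic terms involving the closed pieces $\beta_i,\rho_0$ survive, and run through the cases using the new classification lists in the appendix. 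For parts (i) and (ii) the obstruction should come essentially from the scarcity of closed three- and four-forms imposed by non-unimodularity of $\g$ (in (ii)) or by the purely abelian second summand (in (i)): the surviving component of $\tilde J_\rho^*\alpha$ will live in a subspace of $\g^*$ whose wedge with $\alpha$ and any admissible closed $\sigma$ is forced to lie in a zero cohomology class.

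Part (iii) is the delicate case. Here $\g = A_{4,5}^{-\frac12,-\frac12}$ is unimodular, so it is not covered by (ii) and there are genuinely more closed three-forms available; nevertheless no half-flat $\SU(3)$-structure exists. The simple choice $\alpha = e^5$ alone will fail, and the test one-form will have to mix $e^5$ with an element of $\g^*$ adapted to the coincidence of the two eigenvalues $-\tfrac12,-\tfrac12$ of the defining derivation on the nilradical of $A_{4,5}^{-\frac12,-\frac12}$. Identifying this refined $\alpha$ and checking that it annihilates the enlarged space of closed three-forms is the main obstacle; all other cases should reduce to a systematic if laborious verification guided by the cohomology tables of the appendix.
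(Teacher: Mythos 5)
Your overall strategy for parts (i) and (ii) --- apply Proposition \ref{obst_algebra} with a fixed one-form and check condition \eqref{tildeJ:1} against the general closed three- and four-form --- is the paper's strategy, but two things need attention. First, your candidate $\alpha$ is taken from the two-dimensional summand, whereas the paper verifies (by machine computation, case by case over the refined classes of Table \ref{table_4d}) that $\alpha=\e^4$, a one-form on the \emph{four}-dimensional summand, satisfies \eqref{tildeJ:1}; you give no verification that your choice works, and your heuristic for why it should --- that non-unimodularity of $\g$ starves the supply of closed forms --- proves too much, since $A_{4,9}^{-1/2}$ and $A_{4,12}$ are also non-unimodular yet $A_{4,9}^{-1/2}\op\solv_2$ and $A_{4,12}\op\solv_2$ do admit half-flat $\SU(3)$-structures. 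The obstruction is a genuinely case-by-case computation, not a consequence of a single structural feature, so the specific $\alpha$ must actually be checked for each class.

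The genuine gap is part (iii). You correctly observe that the naive choice fails for $A_{4,5}^{-1/2,-1/2}\op\solv_2$, but your proposed remedy --- find a refined $\alpha$ mixing $\e^5$ with $\g^*$ and still apply Proposition \ref{obst_algebra} --- is precisely the method the paper reports does \emph{not} work in this case, and you leave the identification of such an $\alpha$ as an unresolved ``obstacle''. The paper's actual argument is of a different kind: one proves the identity $\e^5\wedge\tilde J^*_\rho\e^4\wedge\sigma=-\e^4\wedge\tilde J^*_\rho\e^5\wedge\sigma=(\e^4+\sqrt2\,\e^5)\wedge\tilde J^*_\rho(\e^4+\sqrt2\,\e^5)\wedge\sigma$ for all closed $\rho$ and $\sigma$, and then invokes the \emph{symmetry} of the induced metric via \eqref{gonL1}: for a hypothetical half-flat structure this forces $g(\e^4,\e^5)=-g(\e^5,\e^4)=0$ and hence $g(\e^4+\sqrt2\,\e^5,\e^4+\sqrt2\,\e^5)=0$, contradicting positive definiteness. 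In other words, the obstruction in (iii) exploits off-diagonal information about the candidate metric rather than the vanishing of a single diagonal entry; without this (or an equivalent) idea, part (iii) remains unproved in your proposal.
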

\begin{proof}
  We apply Proposition \ref{obst_algebra} for all cases separately
  according to Table \ref{table_4d}. Let $(\e^1,\dots,\e^6)$ be a basis
  of $\g^*\op\h^*$, $\h=\solv_2$ or $\h=\bR^2$, such that
  $(\e^1,\dots,\e^4)$ is the standard basis of $\g^*$ given in Table
  \ref{table_4d} and $(\e^5,\e^6)$ is a basis of $\h^*$. If $\h=\solv_2$
  we always choose $(\e^5,\e^6)$ such that $d\e^5=0,\, d\e^6=\e^{56}$. We
  claim that $\alpha=\e^4$ is for all cases except
  $A_{4,5}^{-\frac{1}{2},-\frac{1}{2}}\op \solv_2$ a one-form
  satisfying the obstruction condition (\ref{tildeJ:1}).

  In fact, the equation can be efficiently verified by a computer
  algebra system as follows. Let $\rho$ be a three-form and $\sigma$ a
  four-form involving altogether 35 coefficients when expressed with
  respect to the induced basis on forms. Due to our distinction of the
  Lie algebra classes in Table \ref{table_4d}, the coefficient
  equations of $d\rho=d\sigma=0$ can be solved in a closed form,
  independently of the parameters in the Lie bracket. Thus, the
  computer can almost instantaneously provide us with explicit
  expressions for the general closed three-form $\rho \in Z^3$ and
  also for the general closed four-form $\sigma \in Z^4$ by eliminating a
  number of parameters. Now, it is straightforward to compute $\tilde
  J_\rho$ via (\ref{tildeJ:2}) with respect to the basis. The result
  allows us to verify equation (\ref{tildeJ:1}) for $\alpha=\e^4$ and all
  $\rho \in Z^3$ and all $\sigma \in Z^4$ for each Lie algebra which falls
  into case (i) or (ii).
  
  Unfortunately, the non-existence of a half-flat $\SU(3)$-structure
  cannot be proved with this method in case (iii). However, a
  different obstruction can be established as follows. On the Lie
  algebra $A_{4,5}^{-\frac{1}{2},-\frac{1}{2}}\op \solv_2$, a
  straightforward calculation yields the identity
  \begin{equation*}
    \e^5\wedge \tilde{J}_{\rho}^* \e^4 \wedge \sigma=-\e^4\wedge \tilde{J}_{\rho}^* \e^5 \wedge \sigma=(\e^4+\sqrt{2} \e^5)\wedge \tilde{J}_{\rho}^* (\e^4+\sqrt{2}\e^5) \wedge \sigma
  \end{equation*}
  for all $\rho\in Z^3$ and all $\sigma\in Z^4$. Suppose that
  $A_{4,5}^{-\frac{1}{2},-\frac{1}{2}}\op \solv_2$ admits a half-flat
  $\SU(3)$-structure $(\rho_0,\omega_0)$. In particular, the forms
  $\rho_0$ and $\sigma_0:=\frac{1}{2}\omega_0^2$ are closed and fulfill the
  previous identity. Hence, if $g_0$ denotes the induced Euclidean
  metric, formula (\ref{gonL1}) shows
  \begin{equation*}
    g_0(\e^5,\e^4)=-g_0(\e^4,\e^5)=g_0(\e^4+\sqrt{2} \e^5,\e^4+\sqrt{2} \e^5).
  \end{equation*}
  This is not possible since $\e^4+\sqrt{2}\e^5$ would be a
  null-vector and we have proved that there cannot exist a half-flat
  $\SU(3)$-structure on $A_{4,5}^{-\frac{1}{2},-\frac{1}{2}}\op
  \solv_2$.
\end{proof}

\begin{proof}[Proof of Theorem \ref{4d_theorem}]
  In order to determine all Lie algebras admitting a half-flat
  $\SU(3)$-structure, we need to prove the existence or non-existence
  of a half-flat $\SU(3)$-structure in every case. For the direct sums
  admitting a half-flat $\SU(3)$-structure, an explicit example can be
  found either in Table \ref{examples42} or in \cite{SH} or in
  \cite{C}. On the remaining direct sums with decomposable
  four-dimensional summand, the existence is obstructed in
  \cite{SH}. The non-existence for the direct sums with indecomposable
  four-dimensional summand is proved in Proposition
  \ref{42obst}. Since we have covered all Lie algebras with a
  four-dimensional summand, the proof is finished.
\end{proof}

\begin{proof}[Proof of Theorem \ref{5d_theorem}]
  The direct sums of indecomposable five-dimensional Lie algebras and
  $\bR$ can be dealt with completely analogous to the direct sums
  considered before thanks to Table \ref{table_5d}. Again, for all
  direct sums admitting a half-flat $\SU(3)$-structure, an explicit
  example can be found either in Table \ref{examples51} or in
  \cite{C}. For the remaining direct sums $\g\op\bR$, let
  $(\e^1,\dots,\e^6)$ be a basis of the dual space canonically identified
  with $\g^*\op \bR^*$ such that
  $(\e^1,\dots,\e^5)$ is the standard basis of $\g^*$ given in Table
  \ref{table_5d} and $\e^6$ spans $\bR^*$. Then, for all remaining cases,
  Proposition \ref{obst_algebra} can be applied with $\alpha=\e^5$ which has
  been verified with Maple as explained in the proof of Proposition
  \ref{42obst}.
\end{proof}

\section{Closed stable three-forms on decomposable Lie algebras}
\label{sec:3forms}
In this section, we turn to the problem of determining the
decomposable Lie algebras which do not admit a closed stable three-form
$\rho$ with $\lambda(\rho)<0$. In fact, such Lie algebras do obviously not
admit a half-flat $\SU(p,q)$-structure, $p+q=3$, not even with an
indefinite metric. Note that this question has already been answered
for direct sums of three-dimensional Lie algebras in \cite[Proposition
5.1]{SH}.

Additionally, we can prove for a number of Lie algebras the
non-existence of any closed stable three-form. Thus, there cannot exist a
half-flat structure on these Lie algebras, not even a half-flat
$\SL(3,\bR)$-structure.
\begin{proposition}
  \label{42lambdageq0}
  Let $\rho \in \Lambda^3\g^*$ be a closed three-form with quartic
  invariant $\lambda(\rho)$ on a Lie algebra $\g=\g_4 \op \g_2$ such that
  the summand $\g_4$ is indecomposable.
  \begin{enumerate}[(i)]
  \item If $\g_2=\bR^2$ and $\g_4$ not in $\{ A_{4,1}, A^{-1,1}_{4,5},
    A^{-\frac{1}{2}}_{4,9}, A_{4,12} \}$, then $\lambda(\rho) \geq 0$.
  \item If $\g_2=\solv_2$ and $Nil(\g_4) =\bR^3$ and $\mathrm
    h^*(\g_4)=(1,0,0,0)$, then $\lambda(\rho) \geq 0$.
  \item If $\g_2=\bR^2$ and $\mathrm h^*(\g_4)=(1,0,0,0)$, then
    $\lambda(\rho)=0$.
  \end{enumerate}
\end{proposition}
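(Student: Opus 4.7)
The plan is to parametrise the space $Z^3(\g)$ of closed three-forms in each case and then evaluate the quartic invariant $\lambda(\rho)$ on the resulting parameter space. I would work in the adapted basis $(\e^1,\dots,\e^4)$ of $\g_4^*$ supplied by Table \ref{table_4d}, together with a basis $(\e^5,\e^6)$ of $\g_2^*$ satisfying $d\e^5=d\e^6=0$ in cases (i) and (iii), and $d\e^5=0$, $d\e^6=\e^{56}$ in case (ii). Using the decomposition
\[
\Lambda^3\g^* \;=\; \bigoplus_{p+q=3}\Lambda^p\g_4^*\wedge\Lambda^q\g_2^*
\]
and the product rule for $d$, the closedness equation $d\rho=0$ splits into coefficient equations that can be solved in closed form. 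In cases (ii) and (iii), the hypothesis $\mathrm h^*(\g_4)=(1,0,0,0)$ cuts the closed $p$-forms on $\g_4$ for $p\geq 2$ down to the image of $d$, producing a small, completely explicit parametrisation of $Z^3(\g)$.

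With $Z^3(\g)$ in hand, I would compute $K_\rho$ and then $\lambda(\rho)=\tfrac{1}{6}\tr K_\rho^2$ as a quartic polynomial in the free parameters using a computer algebra system, exactly as in the proof of Proposition \ref{42obst}. In case (iii) the parameter space is so constrained that $\lambda$ should vanish identically; in cases (i) and (ii) the polynomial is more involved, and the aim is to exhibit it as a sum of squares of polynomials in the parameters (possibly after clearing a non-negative common factor) so as to conclude $\lambda(\rho)\geq 0$.

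The step I expect to be the main obstacle is this sum-of-squares verification in case (i), where Table \ref{table_4d} lists a long sequence of indecomposable four-dimensional Lie algebras, several of which carry one or two continuous parameters in their structure constants. For these, the sum-of-squares decomposition of $\lambda(\rho)$ must be valid uniformly in the structure parameters, not just pointwise. A related technical point is to justify the excluded set $\{A_{4,1}, A_{4,5}^{-1,1}, A_{4,9}^{-\frac{1}{2}}, A_{4,12}\}$ in (i): for each of these I would exhibit an explicit closed three-form $\rho$ with $\lambda(\rho)<0$, both confirming that the exceptions are genuine and demonstrating the sharpness of the statement.
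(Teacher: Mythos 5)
Your proposal matches the paper's proof essentially verbatim: the authors also compute the general closed three-form in the adapted bases of Table \ref{table_4d} with Maple, evaluate $\lambda(\rho)$ on the resulting parameter space uniformly in the structure constants, and read off $\lambda=0$ directly while certifying $\lambda\geq 0$ by applying Maple's \emph{factor} to $\lambda(\rho)$ --- which is the factorization/sum-of-squares step you anticipate as the main obstacle. Your additional verification that the excluded algebras in (i) genuinely admit closed $\rho$ with $\lambda(\rho)<0$ is not needed for the statement as formulated (and is not carried out in the paper's proof), but it is a harmless sharpness check.
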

\begin{proof}
  Let $\rho$ be a closed three-form on $\g$. In the proof of
  Proposition \ref{42obst}, we explained that the general closed
  three-form is very straightforward to determine with computer
  support when $\g_4$ is one of the classes appearing in Table
  \ref{table_4d}. When calculating the quartic invariant $\lambda(\rho)$
  for all Lie algebras in the proposition with the help of Maple,
  those with $\lambda=0$ are easily determined. The cases with $\lambda\geq 0$
  have been determined by applying the useful Maple function
  \emph{factor} to $\lambda(\rho)$.
\end{proof}

Analogously, we can prove the following proposition.
\begin{proposition}
  \label{51lambdageq0}
  Let $\rho \in \Lambda^3\g^*$ be a closed three-form with quartic
  invariant $\lambda(\rho)$ on a Lie algebra $\g=\g_5 \op \bR$ such that
  the summand $\g_5$ is indecomposable.
  \begin{enumerate}[(i)]
  \item If the column $\lambda\geq 0$ in Table \ref{table_5d} is checked
    for $\g_5$, then $\lambda(\rho) \geq 0$.
  \item If $\nil{\g_5}=\bR^4$ and $\mathrm h^3(\g_5)=0$, then
    $\lambda(\rho)=0$.
  \end{enumerate}
\end{proposition}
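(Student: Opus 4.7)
The plan is to apply the same computer-assisted strategy already used in the proof of Proposition \ref{42lambdageq0}, now case-by-case to the indecomposable five-dimensional Lie algebras $\g_5$ listed in Table \ref{table_5d}. I would augment the standard basis $(\e^1,\dots,\e^5)$ of $\g_5^*$ recorded there with a sixth generator $\e^6$ corresponding to the $\bR$ summand and satisfying $d\e^6=0$, write a general three-form $\rho\in\Lambda^3\g^*$ with twenty undetermined coefficients with respect to the induced basis, and impose the linear system $d\rho=0$. Because Table \ref{table_5d} already subdivides each parametrised family according to Lie algebra cohomology, these equations can be solved in closed form uniformly in the remaining bracket parameters, yielding an explicit expression for the general closed three-form as a polynomial in the free coefficients and in the bracket parameters.

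With the general closed $\rho$ in hand, I would compute the quartic invariant $\lambda(\rho)=\frac{1}{6}\tr K_\rho^2$ symbolically from the formulas in Section \ref{prelim}. For statement (ii), where $\nil{\g_5}=\bR^4$ and $\mathrm{h}^3(\g_5)=0$, the cohomology assumption restricts the closed coefficients enough that $\lambda(\rho)$ is expected to vanish identically, and this can be confirmed by direct simplification in Maple. For statement (i), I would run the \emph{factor} routine on $\lambda(\rho)$ for each Lie algebra flagged in the $\lambda\geq 0$ column of Table \ref{table_5d} and verify that the output is a nonnegative rational constant times a product of even powers of real polynomial factors in the free coefficients and the bracket parameters.

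The main obstacle I anticipate is the sign verification in (i) when bracket parameters enter $\lambda(\rho)$ nontrivially, since \emph{factor} only succeeds automatically when the polynomial splits over $\bQ$ into even powers. In borderline cases, one would need a parameter-dependent linear change among the free coefficients of $\rho$ to rewrite $\lambda(\rho)$ as a manifest sum of squares with nonnegative weights. However, given that the analogous computation in Proposition \ref{42lambdageq0} went through with only the \emph{factor} call, I expect the same to work here after the bookkeeping afforded by the refined Table \ref{table_5d}, with any residual parameter-dependent case handled by the completion-of-squares trick just sketched.
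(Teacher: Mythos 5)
Your proposal matches the paper's proof, which is given only by the remark that Proposition \ref{51lambdageq0} is proved ``analogously'' to Proposition \ref{42lambdageq0}: compute the general closed three-form on $\g_5\op\bR$ in closed form (using the cohomology-refined subdivision of Table \ref{table_5d}), evaluate $\lambda(\rho)$ in Maple, check vanishing directly for (ii), and apply \emph{factor} for (i). Your anticipated completion-of-squares fallback is not needed in the paper, but otherwise the approach is the same.
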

Unfortunately, there seems to be no consistent pattern for the Lie
algebras with $\lambda(\rho)\geq 0$ except that the nilradical has to be
either $\bR^4$ or $\h_3\op\bR$.

\section{Appendix}
Tables \ref{table_4d} and \ref{table_5d} contain all indecomposable
four- and five-dimensional Lie algebras ordered by nilradical. All the
Lie algebras are solvable except for the last one $A_{5,40}$. In the
first column, the names used in \cite{PSWZ} are listed. For the
four-dimensional case, a second column is added which contains the
names used in \cite{ABDO}. We remark that there is a clear summary of
all naming conventions for four-dimensional Lie algebras in
\cite{ABDO}.

The standard Lie bracket is encoded dually as explained in section
\ref{prelim}. In both tables, we denote by $\e^1,\dots, \e^{\dim \g}$
a basis of $\g^*$ and the Lie bracket column contains the images of
the basis one-forms under $d$. The expression $\e^i \wedge \e^j$ is
abbreviated by $\e^{ij}$. In the column $\mathfrak z$, we have listed
the dimension of the center. The column labeled $\mathrm{h}^*(\g)$
contains the vector $(\mathrm{h}^1(\g),\dots,
\mathrm{h}^{\dim\g}(\g))$ of the dimensions of the Lie algebra
cohomology groups, where $\mathrm h^0(\g)$ is omitted since it always
equals one. The numbers $\mathrm{h}^*(\g)$ have been calculated with
the Maple package \emph{LieAlgebras} which is a native package since
Maple 11. However, the distinction of the parameter values with
different cohomology had to be carried out by hand since the functions
of the \emph{LieAlgebras} package assume generic parameter values
(without further notification).

The following additional information can be read off directly from the
column $\mathrm{h}^*(\g)$. A Lie algebra is in fact unimodular if and
only if the top cohomology group $\mathrm H^{\dim \g}(\g)$ does not
vanish; see, for instance, \cite[Lemma 2.4]{SH}. Thus, we have
highlighted the unimodular Lie algebras by a bold and underlined
$\mathrm{h}^{\dim\g}$.  Moreover, the identity $\dim{[\g,\g]} = \dim
\g - \mathrm h^1(\g)$ holds since the first cohomology group $\mathrm
H^1(\g)$ equals the annihilator of the derived algebra $[\g,\g]$; see,
for instance, \cite[Lemma 1.1]{Sa}. The step length $s(\g)$ of the
derived series can be determined as follows. Since the derived Lie
algebra $[\g,\g]$ is nilpotent for a solvable Lie algebra, it holds
$[\g,\g] \subset \nil{\g}$. In most cases, equality follows for
dimensional reasons and then we have $s(\g) =s(\nil\g)+1$. In the
remaining cases, the derived algebra $[\g,\g]$ is easily determined
due to its low dimension.

Last but not least, we have also charted the results of this article
in Tables \ref{table_4d} and \ref{table_5d}. In the four-dimensional
case, the column labeled ``hf'' is checked if and only if
$\g\op\solv_2$ admits a half-flat $\SU(3)$-structure. Recall that
$\g\op\bR^2$ never admits a half-flat $\SU(3)$-structure. The two
columns labeled $\lambda \geq 0$ are checked if $\lambda(\rho) \geq 0$ for all
closed three-forms $\rho$ on $\g\op\solv_2$ or $\g\op\bR^2$,
respectively. Similarly, the column $\lambda=0$ is checked if
$\lambda(\rho)=0$ for all closed three-forms $\rho$ on $\g\op\bR^2$. In
fact, none of the Lie algebras $\g\op\solv^2$ satisfies $\lambda(\rho)=0$
for all closed three-forms $\rho$. In the five-dimensional case, the
column ``hf'' is checked if and only if $\g\op\bR$ admits a
half-flat $\SU(3)$-structure. Analogously, the columns $\lambda \geq 0$
and $\lambda=0$ are checked if $\lambda(\rho) \geq 0$ or $\lambda(\rho)=0$,
respectively, for all closed three-forms $\rho$ on $\g\op\bR$.

In Tables \ref{examples42} and \ref{examples51}, we list explicit
examples $(\omega,\rho)$ of half-flat $\SU(3)$-structures. The tables
contain all decomposable Lie algebras which admit a half-flat
$\SU(3)$-structure and which are not contained in \cite{C} or
\cite{SH}. For the convenience of the reader, we added an explicit
expression for the metric $g$ induced by the pair $(\omega,\rho)$. The
label ONB indicates that the basis we consider is orthonormal with
respect to $g$. Similarly, OB stands for orthogonal basis and is
followed by the length of all non-unit basis one-forms.
\clearpage 
\small
\setlength{\LTcapwidth}{14cm}
\renewcommand{\arraystretch}{1.5} 
\setlength{\tabcolsep}{5pt}

\begin{longtable}[ht]{llL{190pt}cc@{\hspace{15pt}}c@{\hspace{5pt}}c@{\hspace{5pt}}c@{\hspace{2pt}}c}
  \caption{Indecomposable four-dimensional Lie algebras} \\ 
  \toprule
  \multicolumn{1}{c}{$\g$} & \cite{ABDO} & \multicolumn{1}{c}{Lie bracket} & $\z$ & $\mathrm{h}^*(\g)$ &
  hf & \multicolumn{2}{c}{$\lambda\!\geq\!0$} & $\lambda\!=\!0$
  \\
  &&&&& $\op\solv_2$ & $\op\solv_2$ & $\op\bR^2$ & $\op\bR^2$ \\
  \midrule
  \endhead
  \label{table_4d}

  && \multicolumn{3}{c}{nilpotent} \\ \cmidrule(l{6pt}r{15pt}){3-5}
  $A_{4,1}$ & $\n_4$ & $(\e^{24}, \e^{34}, 0, 0)$ & 0 & (2,2,2,\fett 1) & \yes & -- & -- & -- \\

  && \multicolumn{3}{c}{Nilradical $\bR^3$} \\ \cmidrule(l{6pt}r{15pt}){3-5}

  $A^{\alpha}_{4,2}$ & $\solv_{4,1/\alpha}$ & $(\alpha\e^{14},
  \e^{24}+\e^{34}, \e^{34}, 0)$ \\  

  && $\alpha \notin\{ -2,-1,0\}$ &0& (1,0,0,0) & -- & \yo & \yo & \yo \\
  && $\alpha=-2$ &0& (1,0,1,\fett 1) & \yes & -- & \yo & --\\
  && $\alpha=-1$ &0& (1,1,1,0) &  -- & -- & \yo & -- \\

  $A_{4,3}$ & $\solv_{4,0}$ & $(\e^{14}, \e^{34}, 0, 0)$ &1&
  (2,2,1,0) & -- & -- & \yo & -- \\ 
  $A_{4,4}$ & $\solv_{4}$ & $(\e^{14}+\e^{24}, \e^{24}+\e^{34},
  \e^{34}, 0)$ & 0& (1,0,0,0) & -- & \yo & \yo & \yo \\ 

  $A^{\alpha,\beta}_{4,5}$ & $\solv_{4,\alpha,\beta}$ & $(\e^{14}, \alpha\e^{24},
  \beta\e^{34}, 0)$ \\

  &\footnote{$A^{\alpha,-\alpha}_{4,5} \cong A^{-1,1/\alpha}_{4,5}$ for $\alpha\neq 0$ and $A^{-1,\beta}_{4,5} \cong A^{-1,-\beta}_{4,5}$.}

  & $-1 < \alpha \leq \beta \leq 1$, $\alpha \beta \neq 0$, $\beta\notin\{-\alpha, -(\alpha+1)\}$ &0& (1,0,0,0) & -- & \yo & \yo & \yo \\

  && $\beta=-(\alpha+1)$, $-1 < \alpha < -\frac{1}{2}$ &0& (1,0,1,\fett 1) & \yes & -- & \yo & --\\
  && $(\alpha,\beta)=(-\frac{1}{2},-\frac{1}{2})$ &0& (1,0,1,\fett 1) & -- &
  -- & \yo & -- \\
  && $\alpha=-1$, $\beta > 0$, $\beta\neq 1$ &0& (1,1,1,0) &  -- & -- & \yo & -- \\

  && $(\alpha,\beta)=(-1, 1)$ &0& (1,2,2,0) & -- & -- & -- & -- \\
  
  $A^{\alpha,\beta}_{4,6}$ & $\solv'_{4,\alpha,\beta}$ & $(\alpha\e^{14},
  \beta\e^{24}+\e^{34}, \e^{42}+\beta\e^{34}, 0)$ \\

  && $\alpha>0$, $\beta\notin\{0,-\frac{1}{2}\alpha\}$ &0& (1,0,0,0) & -- & \yo & \yo & \yo \\
  && $\beta = -\frac{1}{2}\alpha$, $\alpha > 0$ &0& (1,0,1,\fett 1) & \yes & -- & \yo & --\\ 
  && $\beta = 0$, $\alpha > 0$ &0& (1,1,1,0) &  -- & -- & \yo & -- \\

  && \multicolumn{3}{c}{Nilradical $\h_3$} \\ \cmidrule(l{6pt}r{15pt}){3-5}

  $A_{4,7}$ & $\h_4$ & $(2\e^{14}+\e^{23}, \e^{24}+\e^{34}, \e^{34},
  0)$ & 0& (1,0,0,0) & -- & -- & \yo & \yo \\
  $A_{4,8}$ & $\del_4$ & $(\e^{23}, \e^{24}, \e^{43}, 0)$ &1&
  (1,0,1,\fett 1) & \yes & -- & \yo & -- \\

  $A^{\alpha}_{4,9}$&$\del_{4,1/(1+\alpha)}$ &
  $((\alpha+1)\e^{14}+\e^{23}, \e^{24}, \alpha\e^{34}, 0)$ \\
  && $ -1 < \alpha \leq 1$, $\alpha \notin\{-\frac{1}{2},0\}$ &0& (1,0,0,0) &-- & -- & \yo & \yo \\
  && $\alpha=-\frac{1}{2}$ &0& (1,1,1,0) & \yes & -- & -- & -- \\
  && $\alpha=0$ &0& (2,1,0,0) & -- & -- & \yo & -- \\

  $A_{4,10}$ & $\del'_{4,0}$ & $(\e^{23}, \e^{34}, \e^{42}, 0)$ &1& (1,0,1,\fett 1) & \yes & -- & \yo & -- \\

  $A^{\alpha}_{4,11}$ & $\del'_{4,\alpha}$ & $(2\alpha\e^{14}+\e^{23}, \alpha\e^{24}+\e^{34}, \e^{42}+\alpha\e^{34}, 0)$, $\alpha > 0$ & 0& (1,0,0,0) & -- & -- & \yo & \yo \\

 && \multicolumn{3}{c}{Nilradical $\bR^2$} \\ \cmidrule(l{6pt}r{15pt}){3-5}

  $A_{4,12}$ & $\mathfrak{aff}(\bC)$ & $(\e^{13}+\e^{24}, \e^{41}+\e^{23}, 0, 0)$ & 0& (2,1,0,0) & \yes & -- & -- & -- \\ \bottomrule
\end{longtable}

\renewcommand{\arraystretch}{1.57} 
\setlength{\tabcolsep}{2pt}
\begin{longtable}[ht]{l@{\hspace{5pt}}L{9.4cm}c@{\hspace{5pt}}c@{\hspace{19pt}}c@{\hspace{10pt}}cc}
  \caption{Indecomposable five-dimensional Lie algebras} \\
  \toprule \multicolumn{1}{c}{$\g$} & \multicolumn{1}{c}{Lie bracket} & $\mathfrak z$ & $\mathrm{h}^*(\g)$ &
   hf & $\lambda\!\geq\!0$ & $\lambda\!=\!0$ \\ \midrule
  \endfirsthead
  \caption{Indecomposable five-dimensional Lie algebras -- continued} \\
  \toprule \multicolumn{1}{c}{$\g$} & \multicolumn{1}{c}{Lie bracket} & $\mathfrak z$ & $\mathrm{h}^*(\g)$ &
  hf & $\lambda\!\geq\!0$ & $\lambda\!=\!0$ \\ \midrule
  \endhead
  \label{table_5d}

& \multicolumn{3}{c}{nilpotent} \\ \cmidrule(l{1pt}r{19pt}){2-4}

$A_{5,1}$ & $(\e^{35}, \e^{45}, 0, 0, 0)$ & 2 & (3,6,6,3,\fett 1) & \yes & -- & -- \\ 
$A_{5,2}$ & $(\e^{25}, \e^{35}, \e^{45}, 0, 0)$ & 1 & (2,3,3,2,\fett 1) & \yes & -- & -- \\ 
$A_{5,3}$ & $(\e^{35}, \e^{34}, \e^{45}, 0, 0)$ & 2 & (2,3,3,2,\fett 1) & -- & -- & -- \\ 
$A_{5,4}$ & $(\e^{24}+\e^{35}, 0, 0, 0, 0)$ & 1 & (4,5,5,4,\fett 1) & \yes & -- & -- \\ 
$A_{5,5}$ & $(\e^{25}+\e^{34}, \e^{35}, 0, 0, 0)$ & 1 & (3,4,4,3,\fett 1) &
\yes & -- & -- \\ 
$A_{5,6}$ & $(\e^{25}+\e^{34}, \e^{35}, \e^{45}, 0, 0)$ & 1 &
(2,3,3,2,\fett 1) & \yes & -- & -- \\ 

& \multicolumn{3}{c}{Nilradical $\bR^4$} & \\ \cmidrule(l{1pt}r{19pt}){2-4}

$A^{\alpha,\beta,\gamma}_{5,7}$ & $(\e^{15}, \alpha\e^{25}, \beta\e^{35}, \gamma\e^{45}, 0)$ &&& \\
\quad\footnote{ 
  $A^{\alpha,-\alpha,\gamma}_{5,7} \cong A^{-1,1/\alpha,\gamma/\alpha}_{5,7}$, 
  $A^{\alpha,\beta,-(\alpha+\beta)}_{5,7} \cong A^{1/\alpha,\beta/\alpha,-(\beta/\alpha+1)}_{5,7}$,
  $A^{\alpha,\beta,-(\beta+1)}_{5,7} \cong A^{\alpha/\beta,1/\beta,-(1/\beta+1)}_{5,7}$ 
} 

& $- 1 < \alpha \leq \beta \leq \gamma \leq 1$, $\alpha\beta\gamma\neq0$,
$\beta\notin\{-\alpha,-(\alpha+1)\}$, $\gamma\notin\{-\alpha,-(\alpha+1)$, $-\beta,-(\beta+1)$, $-(\alpha+\beta),-(\alpha+\beta+1)\}$ &0& (1,0,0,0,0) & -- & \yo & \yo \\

& $\alpha=-1$, $-1<\beta \leq \gamma$, $\beta\gamma\neq 0$, $\gamma\notin\{-\beta,-\beta+1,-(\beta+1)\}$ &0& (1,1,1,0,0) & -- & \yo & -- \\
& $(\alpha,\beta)=(-1, -1)$, $\gamma\notin\{ -1,0,1,2\}$ &0& (1,2,2,0,0) & -- & \yo & -- \\
& $(\alpha,\beta,\gamma)=(-1, -1, -1)$ &0& (1,3,3,0,0) & -- & \yo & -- \\
& $(\alpha,\beta,\gamma)=(-1, -1, 1)$ &0& (1,4,4,1,\fett 1) & \yes & -- & -- \\
& $(\alpha,\beta,\gamma)=(-1, -1, 2)$ &0& (1,2,3,1,0) & -- & -- & -- \\
& $(\alpha,\gamma)=(-1, -\beta)$, $0<\beta<1$ &0& (1,2,2,1,\fett 1) & \yes & -- & -- \\
& $(\alpha,\gamma)=(-1, -\beta-1)$, $\beta\notin\{ 0,1\}$ &0& (1,1,2,1,0) & -- & -- & -- \\
& $(\alpha,\beta,\gamma)=(1, 1, -2)$ &0& (1,0,3,3,0) & -- & \yo & -- \\
& $\gamma=-(\alpha+\beta+1)$, $-1 < \alpha \leq \beta \leq \gamma \leq 1$, $\alpha\beta\gamma\neq0$, $\beta\neq-\alpha$ &0& (1,0,0,1,\fett 1) & -- & \yo & \yo \\

& $\gamma=-(\beta+1)$, $\alpha\notin\{ -1,0,1,\pm\beta,\pm\gamma\}$, $-1<\beta\leq -\frac{1}{2}$ &0& (1,0,1,1,0) & -- & \yo & -- \\

& $(\alpha,\gamma)=(1, -\beta-1)$, $\beta\leq -\frac{1}{2}$, $\beta\notin\{ -2,-1\}$ &0& (1,0,2,2,0) & -- & \yo & -- \\

$A^{\alpha}_{5,8}$ & $(\e^{25}, 0, \e^{35}, \alpha\e^{45}, 0)$ & && \\
& $-1 < \alpha \leq 1$, $\alpha\neq 0$ & 1 & (2,2,1,0,0) & -- & \yo & -- \\
& $\alpha=-1$ & 1 & (2,3,3,2,\fett 1) & \yes & -- & -- \\

$A^{\alpha,\beta}_{5,9}$ & $(\e^{15}+\e^{25}, \e^{25}, \alpha\e^{35}, \beta\e^{45}, 0)$ & && \\ 
\quad\footnote{$A^{\alpha,\beta}_{5,9} \cong A^{\beta,\alpha}_{5,9}$, $A^{\alpha,0}_{5,9}$
  is decomposable.}
  & $\alpha \leq \beta$, $\alpha\notin\{ -2,-1,0\}$, $\beta \notin\{ -2,-1,0, -\alpha, -(\alpha+1), -(\alpha+2)\}$ &0& (1,0,0,0,0) & -- & \yo & \yo \\

& $\alpha=-2$, $\beta\notin\{-2,-1,0,1,2\}$ &0& (1,0,1,1,0) & -- & \yo & -- \\
& $(\alpha,\beta)\in \{(-2,-2),(-2,1)\}$ &0& (1,0,2,2,0) & -- & \yo & -- \\
& $(\alpha,\beta)\in \{(-2,-1),(-2,2)\}$ &0& (1,1,2,1,0) & -- & -- & -- \\
& $\alpha=-1$, $\beta \notin\{ -2,-1,0,1\}$ &0& (1,1,1,0,0) & -- & \yo & -- \\
& $(\alpha,\beta)=(-1, -1)$ &0& (1,2,2,1,\fett 1) & -- & \yo & -- \\
& $(\alpha,\beta)=(-1, 1)$ &0& (1,2,2,0,0) & -- & \yo & -- \\
& $\beta=-\alpha$, $\alpha<0$, $\alpha \notin\{ -2,-1\}$ &0& (1,1,1,0,0) & -- & \yo & -- \\
& $\beta=-(\alpha+1)$, $\alpha\leq-\frac{1}{2}$, $\alpha \notin\{ -2,-1\}$ &0& (1,0,1,1,0) & -- & \yo & -- \\
& $\beta=-(\alpha+2)$, $\alpha<-1$, $\alpha\neq-2$ &0& (1,0,0,1,\fett 1) & -- & \yo & \yo \\

$A_{5,10}$ & $(\e^{25}, \e^{35}, 0, \e^{45}, 0)$ & 1 & (2,2,2,1,0) & -- & \yo & -- \\ 
$A^{\alpha}_{5,11}$ & $(\e^{15}+\e^{25}, \e^{25}+\e^{35}, \e^{35},
\alpha\e^{45}, 0)$ & && \\ 
& $\alpha \notin\{ -3,-2,-1,0\}$ &0& (1,0,0,0,0) & -- & \yo & \yo \\
& $\alpha=-3$ &0& (1,0,0,1,\fett 1) & -- & \yo & \yo \\
& $\alpha=-2$ &0& (1,0,1,1,0) & -- & \yo & -- \\
& $\alpha=-1$ &0& (1,1,1,0,0) & -- & \yo & -- \\

$A_{5,12}$ & $(\e^{15}+\e^{25}, \e^{25}+\e^{35}, \e^{35}+\e^{45},
\e^{45}, 0)$ & 0 & (1,0,0,0,0) & -- & \yo & \yo \\ 

$A^{\alpha,\beta,\gamma}_{5,13}$ & $(\e^{15}, \alpha\e^{25}, \beta\e^{35}+\gamma\e^{45},
-\gamma\e^{35}+\beta\e^{45}, 0)$ & && \\ 

\quad\footnote{$A^{\alpha,\beta,0}_{5,13} = A^{\alpha,\beta,\beta}_{5,7}$,
  $A^{\alpha,\beta,\gamma}_{5,13} \cong A^{\alpha,\beta,-\gamma}_{5,13}$, 
  $A^{-1,\beta,\gamma}_{5,13} \cong A^{-1,-\beta,-\gamma}_{5,13}$,  $A^{0,\alpha,\beta}_{5,13}$ is decomposable.} 

  & $-1 < \alpha \leq 1$, $\alpha\neq 0$, $\beta \notin\{ -\frac{1}{2},0,-\frac{1}{2}\alpha,-\frac{1}{2}(\alpha+1)\}$, $\gamma>0$ &0&
  (1,0,0,0,0) & -- & \yo & \yo \\
  & $\alpha=-1$, $\beta>0$, $\beta \notin\{ 0,\frac{1}{2}\}$, $\gamma>0$ &0& (1,1,1,0,0) &
  -- & \yo & -- \\
  & $(\alpha,\beta)=(-1, 0)$, $\gamma>0$ &0& (1,2,2,1,\fett 1) & \yes & -- & -- \\
  & $(\alpha,\beta)=(-1, \frac{1}{2})$, $\gamma>0$ &0& (1,1,2,1,0) & -- & -- & -- \\
  & $\beta=0$, $-1 < \alpha \leq 1$, $\alpha\neq 0$, $\gamma>0$ &0& (1,1,1,0,0) & -- & \yo & -- \\
  & $\beta=-\frac{1}{2}$, $\alpha\notin\{ -1,0,1\}$, $\gamma>0$ &0&
  (1,0,1,1,0) & -- & \yo & -- \\
  & $(\alpha,\beta)=(1, -\frac{1}{2})$, $\gamma>0$ &0& (1,0,2,2,0) & -- & \yo & -- \\  
  & $\beta=-\frac{1}{2}(\alpha+1)$, $-1 < \alpha \leq 1$, $\alpha\neq 0$, $\gamma>0$ &0&
  (1,0,0,1,\fett 1) & -- & \yo & \yo \\

$A^{\alpha}_{5,14}$ & $(\e^{25}, 0, \alpha\e^{35}+\e^{45}, -\e^{35}+\alpha\e^{45},
0)$ & && \\ 
  & $\alpha \neq 0$ & 1 & (2,2,1,0,0) & -- & \yo & -- \\
  & $\alpha=0$ & 1 & (2,3,3,2,\fett 1) & \yes & -- & -- \\

$A^{\alpha}_{5,15}$ & $(\e^{15}+\e^{25}, \e^{25}, \alpha\e^{35}+\e^{45},
\alpha\e^{45}, 0)$ & && \\ 
& $0 < |\alpha| \leq 1$, $\alpha\notin\{ -1,-\frac{1}{2}\}$ &0& (1,0,0,0,0) & -- & \yo & \yo \\
& $\alpha=-1$ &0& (1,2,2,1,\fett 1) & \yes & -- & -- \\
& $\alpha=-\frac{1}{2}$ &0& (1,0,1,1,0) & -- & \yo & -- \\
& $\alpha=0$ &1& (2,2,1,0,0) & -- & \yo & -- \\

$A^{\alpha,\beta}_{5,16}$ & $(\e^{15}+\e^{25}, \e^{25}, \alpha\e^{35}+\beta\e^{45},
-\beta\e^{35}+\alpha\e^{45}, 0)$ & && \\ 
\quad\footnote{$A^{\alpha,\beta}_{5,16} \cong A^{\alpha,-\beta}_{5,16}$, $A^{\alpha,0}_{5,16}=A^{\alpha,\alpha}_{5,9}$}
& $\alpha \notin\{ -1,-\frac{1}{2},0\}$, $\beta>0$ &0& (1,0,0,0,0) & -- & \yo & \yo \\
& $\alpha=-1$, $\beta>0$ &0& (1,0,0,1,\fett 1) & -- & \yo & \yo \\
& $\alpha=-\frac{1}{2}$, $\beta>0$ &0& (1,0,1,1,0) & -- & \yo & -- \\
& $\alpha=0$, $\beta>0$ &0& (1,1,1,0,0) & -- & \yo & -- \\

$A^{\alpha,\beta,\gamma}_{5,17}$ & $(\alpha\e^{15}+\e^{25}, -\e^{15}+\alpha\e^{25}, \beta\e^{35}+\gamma\e^{45},$ $-\gamma\e^{35}+\beta\e^{45}, 0)$ & && \\

\quad\footnote{
$A^{\alpha,\beta,0}_{5,17} \cong A^{1,\alpha/\beta,1/\beta}_{5,13}$ for $\beta \neq 0$, 
$A^{\alpha,\beta,\gamma}_{5,17} \cong A^{\alpha,\beta,-\gamma}_{5,17} \cong A^{-\alpha,-\beta,\gamma}_{5,17} \cong   A^{\beta/\gamma,\alpha/\gamma,1/\gamma}_{5,17}$ for $\gamma\neq 0$, 
$A^{\alpha,0,0}_{5,17}$ is decomposable.
}  

& $\alpha>0$, $\beta\notin\{ 0,-\alpha\}$, $0<\gamma\leq 1$ &0& (1,0,0,0,0) & -- & \yo & \yo \\
& $\beta=-\alpha$, $\alpha>0$, $0<\gamma<1$ &0& (1,0,0,1,\fett 1) & -- & \yo & \yo \\
& $(\beta,\gamma)=(-\alpha, 1)$, $\alpha>0$ &0& (1,2,2,1,\fett 1) & \yes & -- & -- \\
& $\alpha=0$, $\beta>0$, $\gamma>0$ &0& (1,1,1,0,0) & -- & \yo & -- \\
& $(\alpha,\beta)=(0, 0)$, $0<\gamma<1$ &0& (1,2,2,1,\fett 1) & \yes & -- & -- \\
& $(\alpha,\beta,\gamma)=(0, 0, 1)$ &0& (1,4,4,1,\fett 1) & \yes & -- & -- \\

$A^{\alpha}_{5,18}$ & $(\alpha\e^{15}+\e^{25}+\e^{35},
-\e^{15}+\alpha\e^{25}+\e^{45}, \alpha\e^{35}+\e^{45}, -\e^{35}+\alpha\e^{45}, 0)$
& && \\ 
& $\alpha > 0$ & 0 & (1,0,0,0,0) & -- & \yo & \yo \\
& $\alpha=0$ & 0 & (1,2,2,1,\fett 1) & \yes & -- & -- \\

& \multicolumn{3}{c}{Nilradical $\h_3\op\bR$} & \\ \cmidrule(l{1pt}r{19pt}){2-4}

$A^{\alpha,\beta}_{5,19}$ & $(\alpha\e^{15}+\e^{23}, \e^{25}, (\alpha-1)\e^{35},
\beta\e^{45}, 0)$ & && \\ 

\quad\footnote{
$A^{\alpha,\beta}_{5,19} \cong A^{\alpha/(\alpha-1),\beta/(\alpha-1)}_{5,19}$ for $\alpha \neq 1$, $A^{0,\beta}_{5,19} \cong A^{0,-\beta}_{5,19}$, $A^{\alpha,0}_{5,19}$ is decomposable.
} 

& $0<\alpha \leq 2$, $\alpha\notin\{ \frac{1}{2},1\}$, $\beta\notin\{ -1,0, -2\alpha, -2\alpha+1, -(\alpha+1), -\alpha+1\}$ & 0 & (1,0,0,0,0) & -- & \yo & -- \\

& $\alpha=-1$, $\beta\notin\{ 0,-1,2,3\}$ &0& (1,1,1,0,0) & -- & -- & -- \\
& $(\alpha,\beta)=(-1, -1)$ &0& (1,2,2,0,0) & -- & -- & -- \\
& $(\alpha,\beta)=(-1, 2)$ &0& (1,2,2,1,\fett 1) & \yes & -- & -- \\
& $(\alpha,\beta)=(-1, 3)$ &0& (1,1,2,1,0) & \yes & -- & -- \\
& $\alpha=0$, $\beta>0$ &1& (1,0,1,1,0) & -- & \yo & -- \\
& $(\alpha,\beta)=(0, 1)$ &1& (1,1,3,2,0) & -- & -- & -- \\
& $\alpha=1$, $\beta \notin\{ -2,-1,0\}$ &0& (2,1,0,0,0) & -- & \yo & -- \\
& $(\alpha,\beta)=(1, -2)$ &0& (2,1,1,2,\fett 1) & -- & -- & -- \\
& $(\alpha,\beta)=(1, -1)$ &0& (2,2,2,1,0) & -- & -- & -- \\
& $\beta=-1$, $\alpha\notin\{-1,0,1,\frac{1}{2},2\}$ &0& (1,1,1,0,0) & -- & \yo & -- \\
& $(\alpha,\beta)=(2, -1)$ &0& (1,2,2,0,0) & -- & \yo & -- \\
& $\beta=-(\alpha+1)$, $\alpha\notin\{-1,0,1,\frac{1}{2},2\}$ &0& (1,0,1,1,0) & -- & -- & -- \\
& $(\alpha,\beta)=(2, -3)$ &0& (1,0,2,2,0) & \yes & -- & -- \\
& $\beta=-2\alpha$, $0<\alpha \leq 2$, $\alpha\notin\{ \frac{1}{2},1\}$ &0& (1,0,0,1,\fett 1) & -- & \yo & -- \\

$A^{\alpha}_{5,20}$ & $(\alpha\e^{15}+\e^{23}+\e^{45}, \e^{25}, (\alpha-1)\e^{35},
\alpha\e^{45}, 0)$ & && \\ 
& $\alpha \notin\{ -1, -\frac{1}{2}, 0, \frac{1}{3}, \frac{1}{2}, 1\}$ &0& (1,0,0,0,0) & -- & \yo & -- \\
& $\alpha\in \{-1,\frac{1}{2}\}$ &0& (1,1,1,0,0) & -- & \yo & -- \\
& $\alpha\in \{-\frac{1}{2},\frac{1}{3}\}$ &0& (1,0,1,1,0) & -- & -- & -- \\
& $\alpha=0$ &1& (2,1,1,2,\fett 1) & -- & -- & -- \\
& $\alpha=1$ &0& (2,1,0,0,0) & -- & \yo & -- \\

$A_{5,21}$ & $(2\e^{15}+\e^{23}, \e^{25}, \e^{25}+\e^{35},
\e^{35}+\e^{45}, 0)$ & 0 & (1,0,0,0,0) & -- & \yo & -- \\ 
$A_{5,22}$ & $(\e^{23}, 0, \e^{25}, \e^{45}, 0)$ & 1 & (2,2,2,1,0) &
 -- & -- & -- \\ 

$A^{\alpha}_{5,23}$ & $(2\e^{15}+\e^{23}, \e^{25}, \e^{25}+\e^{35},
\alpha\e^{45}, 0)$ & && \\ 
& $\alpha\notin\{ -4,-3,-1,0\}$ &0& (1,0,0,0,0) & -- & \yo & -- \\
& $\alpha=-4$ &0& (1,0,0,1,\fett 1) & -- & \yo & -- \\
& $\alpha=-3$ &0& (1,0,1,1,0) & -- & -- & -- \\
& $\alpha=-1$ &0& (1,1,1,0,0) & -- & \yo & -- \\

$A_{5,24}$ \footnote{The parameter in \cite{PSWZ} is redundant.} &
$(2\e^{15}+\e^{23}+\e^{45}, \e^{25}, \e^{25}+\e^{35}, 2\e^{45}, 0)$ &
0 & (1,0,0,0,0) & -- & \yo & -- \\ 

$A^{\alpha,\beta}_{5,25}$ & $(2\beta\e^{15}+\e^{23}, \beta\e^{25}-\e^{35},
 \e^{25}+\beta\e^{35}, \alpha\e^{45}, 0)$ & && \\ 

& $\alpha \neq 0$, $\beta \notin\{ 0,-\frac{1}{4}\alpha\}$ &0& (1,0,0,0,0) & -- & \yo & -- \\
& $\beta=0$, $\alpha \neq 0$ &1& (1,0,1,1,0) & -- & \yo & -- \\
& $\beta=-\frac{1}{4}\alpha$, $\alpha \neq 0$ &0& (1,0,0,1,\fett 1) & -- & \yo & -- \\

$A^{\alpha,\varepsilon}_{5,26}$ & $(2\alpha\e^{15}+\e^{23}+\varepsilon\e^{45},
\alpha\e^{25}-\e^{35}, \e^{25}+\alpha\e^{35}, 2\alpha\e^{45}, 0)$ & && \\ 

& $\alpha\neq 0$, $\varepsilon = \pm 1$ &0& (1,0,0,0,0) & -- & \yo & -- \\
& $\alpha=0$, $\varepsilon= \pm 1$ &1& (2,1,1,2,\fett 1) & -- & \yo & -- \\

$A_{5,27}$ & $(\e^{15}+\e^{23}+\e^{45}, 0, \e^{35}, \e^{35}+\e^{45}, 0)$ & 0 & (2,1,0,0,0) & -- & \yo & -- \\ 

$A^{\alpha}_{5,28}$ & $(\alpha\e^{15}+\e^{23}, (\alpha-1)\e^{25}, \e^{35},
 \e^{35}+\e^{45}, 0)$ & && \\ 
& $\alpha \notin \{ -2, -1, -\frac{1}{2}, 0, \frac{1}{2}, 1\}$ &0& (1,0,0,0,0) & -- & \yo & -- \\
& $\alpha=-2$ &0& (1,0,1,1,0) & -- & -- & -- \\
& $\alpha\in \{-1,\frac{1}{2}\}$ &0& (1,1,1,0,0) & -- & -- & -- \\
& $\alpha=-\frac{1}{2}$ &0& (1,0,0,1,\fett 1) & -- & \yo & -- \\
& $\alpha=0$ &1& (1,1,2,1,0) & -- & -- & -- \\
& $\alpha=1$ &0& (2,1,0,0,0) & -- & \yo & -- \\

$A_{5,29}$ & $(\e^{15}+\e^{24}, \e^{25}, \e^{45}, 0, 0)$ & 1 &
(2,2,1,0,0) & -- & \yo & -- \\ 

& \multicolumn{3}{c}{Nilradical $A_{4,1}$} & \\ \cmidrule(l{1pt}r{19pt}){2-4}

 $A^{\alpha}_{5,30}$ & $((\alpha+1)\e^{15}+\e^{24}, \alpha\e^{25}+\e^{34},
 (\alpha-1)\e^{35}, \e^{45}, 0)$ & && \\ 
& $\alpha \notin\{ -2, -1, -\frac{1}{3}, 0, \frac{1}{2}, 1\}$ &0& (1,0,0,0,0) & -- & -- & -- \\
& $\alpha\in \{-2,\frac{1}{2}\}$ &0& (1,1,1,0,0) & -- & -- & -- \\
& $\alpha=-1$ &1& (1,0,1,1,0) & -- & -- & -- \\
& $\alpha=-\frac{1}{3}$ &0& (1,0,0,1,\fett 1) & -- & -- & -- \\
& $\alpha=0$ &0& (1,0,1,1,0) & \yes & -- & -- \\
& $\alpha=1$ &0& (2,1,0,0,0) & -- & -- & -- \\

$A_{5,31}$ & $(3\e^{15}+\e^{24}, 2\e^{25}+\e^{34}, \e^{35}+\e^{45},
 \e^{45}, 0)$ & 0 & (1,0,0,0,0) & -- & -- & -- \\ 
$A^{\varepsilon}_{5,32}$ & $(\e^{15}+\e^{24}+\varepsilon\e^{35}, \e^{25}+\e^{34},
\e^{35}, 0, 0)$, $\varepsilon = \pm 1$ & 0 & (2,1,0,0,0) & -- & -- & -- \\ 

& \multicolumn{3}{c}{Nilradical $\bR^3$} & \\ \cmidrule(l{1pt}r{19pt}){2-4}

$A^{\alpha,\beta}_{5,33}$ & $(\e^{14}, \e^{25}, \beta\e^{34}+\alpha\e^{35}, 0, 0)$ & && \\ 
\quad\footnote{$A^{\alpha,0}_{5,33}$ and $A^{0,\beta}_{5,33}$ are decomposable.} & $\alpha, \beta \in \bR^*$, $(\alpha,\beta) \neq (-1,-1)$ & 0 & (2,1,0,0,0) & -- & -- & -- \\
& $(\alpha,\beta)=(-1, -1)$ & 0 & (2,1,1,2,\fett 1) & \yes & -- & -- \\
  
$A^{\alpha}_{5,34}$ & $(\alpha\e^{14}+\e^{15}, \e^{24}+\e^{35}, \e^{34}, 0,
0)$, $\alpha \in \bR$ & 0 & (2,1,0,0,0) & -- & -- & -- \\ 

$A^{\alpha,\beta}_{5,35}$ & $(\beta\e^{14}+\alpha\e^{15}, \e^{24}+\e^{35},
-\e^{25}+\e^{34}, 0, 0)$ & && \\ 
& $(\alpha,\beta) \notin\{ (0,-2), (0,0)\}$ & 0 & (2,1,0,0,0) &  -- & -- & -- \\
& $(\alpha,\beta)=(0, -2)$ & 0 & (2,1,1,2,\fett 1) & \yes & -- & -- \\
$A_{5,38}$ & $(\e^{14}, \e^{25}, \e^{45}, 0, 0)$ & 1 & (2,2,1,0,0) & -- & -- & -- \\ 
$A_{5,39}$ & $(\e^{14}+\e^{25}, -\e^{15}+\e^{24}, \e^{45}, 0, 0)$ & 1 & (2,2,1,0,0) & -- & -- & -- \\ 

& \multicolumn{3}{c}{Nilradical $\h_3$} & \\ \cmidrule(l{1pt}r{19pt}){2-4}

$A_{5,36}$ & $(\e^{14}+\e^{23}, \e^{24}-\e^{25}, \e^{35}, 0, 0)$ & 0 & (2,1,0,0,0) & \yes & -- & -- \\ 
$A_{5,37}$ & $(2\e^{14}+\e^{23}, \e^{24}+\e^{35}, -\e^{25}+\e^{34}, 0, 0)$ & 0 & (2,1,0,0,0) & \yes & -- & -- \\ 

& \multicolumn{3}{c}{non-solvable, Nilradical $\bR^2$} & \\ \cmidrule(l{1pt}r{19pt}){2-4}

$A_{5,40}$ & $(2\e^{12}, -\e^{13}, 2\e^{23}, \e^{24}+\e^{35},
\e^{14}-\e^{25})$ & 0 & (0,1,1,0,\fett 1) & \yes & -- & -- \\ \bottomrule
\end{longtable}

\clearpage
\setlength{\LTcapwidth}{11.5cm}
\renewcommand{\arraystretch}{2.4} 
\setlength{\tabcolsep}{0.5cm}
\begin{longtable}{L{2.6cm}L{10.6cm}}
  \caption{Direct sums of a four-dimensional and a two-dimensional Lie
    algebra which admit a half-flat $\SU(3)$-structure and which are
    not contained in \cite{SH}} \\
  \toprule Lie algebra & Normalized half-flat $\SU(3)$-structure
  \footnotemark \\ \midrule
  \endhead

  \label{examples42}
  \footnotetext{In each case except $B^{\beta}$, the exterior
    derivatives of the one-forms $\e^1,\dots,\e^4$ are those given in Table
    \ref{table_4d} and $d\e^5=0$, $d\e^6=\e^{56}$.}

  $A_{4,1}\op\solv_2$ & $\omega=-\e^{16}+\e^{25}-\e^{34}$, 

$\rho=\e^{123}-\e^{145}+\e^{156}-\e^{246}+\e^{345}-2\e^{356}$,

$g=(\e^{1})^2+(\e^{2})^2+2(\e^{3})^2+(\e^{4})^2+(\e^{5})^2+2(\e^{6})^2-2\e^{1}\ccdot\e^{3}+2\e^{4}\ccdot\e^{6}$\\

$B^{\beta}\op \solv_2$, $\beta>0$ \footnote{The family $B^{\beta}$,
  $\beta >0$, with the Lie bracket $\left(\beta
    \e^{14}\!-\!\e^{24},\e^{14},-\beta \e^{34},0\right)$ unifies the
  cases $A_{4,2}^{-2}$, $A_{4,5}^{\alpha,-(\alpha+1)}$ for
  $-1<\alpha<-\frac{1}{2}$ and $A_{4,6}^{\alpha,-\alpha/2}$ for
  $\alpha>0$ %in Table \ref{table_4d}
  since
    \begin{align*}
      & B^{\beta}\cong A_{4,6}^{\alpha,-\alpha/2}
      && \mbox{for $0<\beta<2$ and $\alpha=\frac{2\beta}{\sqrt{4-\beta^2}}$,} \\
      & B^{2}\cong A_{4,2}^{-2}, \\
      &B^{\beta} \cong A_{4,5}^{\alpha,-(\alpha+1)}
      && \mbox{for $\beta>2$ and $\alpha=-\frac{1}{2}-\frac{\sqrt{\beta^2-4}}{2\beta}$.}
    \end{align*}
\vspace{-2ex}
}

  & $\omega=\e^{15}+\e^{24}+\e^{36}$,
  $\rho=\e^{123}-\e^{146}+\e^{256}+\e^{345}$, 
  ONB \\

  $A_{4,8}\op\solv_2$ &
  $\omega=-\e^{14}+\e^{16}-\e^{24}+\e^{25}+\e^{34}+\e^{35}$, 

  $\rho=2\e^{123}+4\e^{124}+4\e^{134}-2\e^{156}-2\e^{234}+2\e^{236}-\e^{245}+3\e^{246}-3\e^{256}+\e^{345}$ $+3\e^{346}+3\e^{356}+12\e^{456}$,

  $g=2(\e^{1})^2+4(\e^{2})^2+4(\e^{3})^2+57(\e^{4})^2+2(\e^{5})^2+3(\e^{6})^2+4\e^{1}\ccdot\e^{2} -4\e^{1}\ccdot\e^{3}$ $-18\e^{1}\ccdot\e^{4}$ $+2\e^{1}\ccdot\e^{6}-4\e^{2}\ccdot\e^{3}-26\e^{2}\ccdot\e^{4}-2\e^{2}\ccdot\e^{5}+4\e^{2}\ccdot\e^{6}+26\e^{3}\ccdot\e^{4}-2\e^{3}\ccdot\e^{5}$ $-4\e^{3}\ccdot\e^{6}-18\e^{4}\ccdot\e^{6}$\\

  $A^{-\frac{1}{2}}_{4,9}\op\solv_2$
  & $\omega=\e^{16}-3\e^{24}+2\e^{25}+\e^{35}$, 

  $\rho=\sqrt 3
  \left(\e^{124}+2\e^{134}-\e^{135}+\e^{146}-2\e^{156}+2\e^{236}+4\e^{245}-\e^{345}+\frac{29}{2}\e^{456} \right)$, 

  $g=(\e^{1})^2+4(\e^{2})^2+4(\e^{3})^2+84(\e^{4})^2+17(\e^{5})^2+29(\e^{6})^2-18\e^{1}\ccdot\e^{4}+8\e^{1}\ccdot\e^{5}+4\e^{2}\ccdot\e^{3}$ $+16\e^{2}\ccdot\e^{6}-4\e^{3}\ccdot\e^{6}-75\e^{4}\ccdot\e^{5}$
  \\

  $A_{4,10}\op\solv_2$ & $\omega=-\e^{14}-\e^{16}-\e^{25}-\e^{36}$, 

  $\rho=\e^{123}-\e^{156}+\e^{234}+\e^{236}+\e^{246}-\e^{345}+\e^{356}-\e^{456}$, 

$g=(\e^{1})^2+(\e^{2})^2+(\e^{3})^2+2(\e^{4})^2+(\e^{5})^2+3(\e^{6})^2+2\e^{1}\ccdot\e^{4}+2\e^{1}\ccdot\e^{6}+4\e^{4}\ccdot\e^{6}$
  \\ 

$A_{4,12}\op\solv_2$
  & $\omega=\e^{16}-2\e^{23}+\e^{25}+\e^{34}-\e^{36}$, 

$\rho=\e^{123}+2\e^{134}-\e^{136}+\e^{145}+\e^{156}-\e^{235}-\e^{246}+2\e^{356}$, 

$g=(\e^{1})^2+(\e^{2})^2+9(\e^{3})^2+(\e^{4})^2+2(\e^{5})^2+3(\e^{6})^2+4\e^{1}\ccdot\e^{3}-2\e^{1}\ccdot\e^{5}$ $-2\e^{2}\ccdot\e^{6}-8\e^{3}\ccdot\e^{5}-2\e^{4}\ccdot\e^{6}$
  \\

  $\solv_2 \op \solv_2 \op \solv_2$ \footnote{The exterior derivatives of the basis one-forms are $(0, \e^{12}, 0, \e^{34}, 0, \e^{56})$.} &
  $\omega=\e^{12}-\e^{23}-\e^{25}-\e^{35}+\e^{46}$, 

$\rho=\e^{124}-\e^{126}+2\e^{134}+3\e^{156}-\e^{234}+\e^{256}+\e^{345}+2\e^{356}$, 

$g=6(\e^{1})^2+(\e^{2})^2+4(\e^{3})^2+(\e^{4})^2+3(\e^{5})^2+2(\e^{6})^2+8\e^{1}\ccdot\e^{3}+6\e^{1}\ccdot\e^{5}$ $+2\e^{2}\ccdot\e^{3}-2\e^{2}\ccdot\e^{5}+2\e^{3}\ccdot\e^{5}+2\e^{4}\ccdot\e^{6}$ \\ \bottomrule
\end{longtable}

\setlength{\LTcapwidth}{12.5cm}
\renewcommand{\arraystretch}{2.35} 
\setlength{\tabcolsep}{0.6cm}
\begin{longtable}{L{3.6cm}L{9.4cm}}
  \caption{Direct sums of indecomposable non-nilpotent
    five-dimensional Lie algebras and the one-dimensional Lie algebra
    admitting a half-flat $\SU(3)$-structure} \\
  \toprule Lie algebra & Normalized half-flat $\SU(3)$-structure\footnotemark \\ \midrule
  \endfirsthead
  \toprule Lie algebra & Normalized half-flat $\SU(3)$-structure \\
  \midrule
  \endhead

  \label{examples51}
  \footnotetext{In each case, the exterior derivatives of the
    one-forms $\e^1,\dots,\e^5$ are those given in Table
    \ref{table_5d} and $d\e^6=0$.}

  $A_{5,7}^{-1,\beta,-\beta}\op\bR$, $0<\beta\leq 1$, 
  $A_{5,13}^{-1,0,\gamma}\op\bR$, $\gamma>0$,
  $A_{5,17}^{0,0,\gamma}\op\bR$, $0<\gamma\leq1$,
  $A_{5,8}^{-1}\op\bR$, $A_{5,14}^{0}\op\bR$ &

  $\omega=-\e^{13}+\e^{24}+\e^{56}$,
  $\rho=\e^{126}+\e^{145}+\e^{235}+\e^{346}$, ONB \\

  $A_{5,17}^{\alpha,-\alpha,1} \op \bR$, $\alpha>0$, $A_{5,15}^{-1} \op \bR$ &
  $\omega=\e^{13}+\e^{24}-\e^{56}$,
  $\rho=\e^{125}+\e^{146}-\e^{236}-\e^{345}$, ONB \\

  $A_{5,18}^{0} \op \bR$ & $\omega=\e^{12}-\e^{34}-\e^{56}$,
  $\rho=\e^{136}+\e^{145}-\e^{235}+\e^{246}$, ONB \\

  $A_{5,19}^{-1,2} \op \bR$ &
  $\omega=\e^{13}+\e^{24}-2\e^{25}-\e^{56}$, 

  $\rho=-\e^{126}+\e^{145}-\e^{234}+\e^{346}-\e^{356}$,

  $g=(\e^{1})^2+2(\e^{2})^2+(\e^{3})^2+(\e^{4})^2+2(\e^{5})^2+(\e^{6})^2-2\e^{2}\ccdot\e^{6}-2\e^{4}\ccdot\e^{5}$ \\

  $A_{5,19}^{-1,3} \op \bR$ & $\omega=\e^{13}-2\e^{25}-\e^{46}$,
  $\rho=\e^{126}-2\e^{145}+\e^{234}+2\e^{356}$, OB, $||e_5||^2=2$  \\

  $A_{5,19}^{2,-3} \op \bR$ & $\omega=\e^{12}+2\e^{35}-\e^{46}$,
  $\rho=\e^{134}+2\e^{156}+\e^{236}+2\e^{245}$, OB, $||e_5||^2=2$  \\

  $A_{5,30}^{0} \op \bR$ & $\omega=\e^{16}+\e^{25}+\e^{34}$, 

  $\rho=\e^{123}+2\e^{145}-\e^{156}-\e^{246}-\e^{345}+\e^{356}$, 

  $g=2(\e^{1})^2+(\e^{2})^2+(\e^{3})^2+2(\e^{4})^2+(\e^{5})^2+(\e^{6})^2-2\e^{1}\ccdot\e^{3}+2\e^{4}\ccdot\e^{6}$ \\

  $A_{5,33}^{-1,-1} \op \bR$ & $\omega=\e^{12}-\e^{36}-\e^{45}$,
  $\rho=-\e^{135}+\e^{146}+\e^{234}+\e^{256}$, ONB \\

  $A_{5,35}^{0,-2} \op \bR$ & $\omega=\e^{16}+\e^{25}+3\e^{26}+\e^{34}$, 

  $\rho=\e^{123}+\e^{145}+2\e^{146}+\e^{245}+\e^{246}+\e^{356}$,

  $g=(\e^{1})^2+2(\e^{2})^2+(\e^{3})^2+(\e^{4})^2+(\e^{5})^2+5(\e^{6})^2+2\e^{1}\ccdot\e^{2}+4\e^{5}\ccdot\e^{6}$ \\

  $A_{5,36} \op \bR$ &
  $\omega=\frac{1}{12}\e^{12}+\e^{13}+\e^{16}-\frac{1}{4}\e^{24}+\e^{46}+\e^{56}$,

  $\rho=-\frac{1}{6}\e^{124}+\frac{1}{12}\e^{125}-\e^{134}-\e^{135}+4\e^{146}+4\e^{236}+3\e^{345}+3\e^{456}$,

  $g=\frac{5}{12}(\e^{1})^2+\frac{1}{12}(\e^{2})^2+12(\e^{3})^2+\frac{7}{4}(\e^{4})^2+\frac{1}{4}(\e^{5})^2+28(\e^{6})^2$ $+\frac{3}{2}\e^{1}\ccdot\e^{4}-\frac{1}{2}\e^{1}\ccdot\e^{5}+2\e^{2}\ccdot\e^{6}+24\e^{3}\ccdot\e^{6}-\e^{4}\ccdot\e^{5}$ \\

  $A_{5,37} \op \bR$ &
  $\omega=-\frac{1}{3}\e^{16}+3\e^{24}+\e^{35}$,

  $\rho=-\e^{125}+3\e^{134}+2\e^{146}+\e^{236}+6\e^{345}-\frac{13}{3}\e^{456}$,

  $g=(\e^{1})^2+3(\e^{2})^2+3(\e^{3})^2+3(\e^{4})^2+\frac{13}{3}(\e^{5})^2+\frac{13}{9}(\e^{6})^2+4\e^{1}\ccdot\e^{5}-4\e^{3}\ccdot\e^{6}$ \\

  $A_{5,40} \op \bR$ & $\omega=\e^{14}+\e^{25}+\e^{34}-\e^{36}$,

  $\rho=\e^{124}-\e^{126}-\e^{135}+\e^{234}+\e^{456}$, 

  $g=(\e^{1})^2+(\e^{2})^2+(\e^{3})^2+2(\e^{4})^2+(\e^{5})^2+(\e^{6})^2-2\e^{4}\ccdot\e^{6}$ \\ \bottomrule
\end{longtable}

\normalsize

\end{document}